\let\origsection=\section \def\section{\@ifstar{\origsection*}{\mysection}} 
\def\mysection{\@startsection{section}{1}\z@{.7\linespacing\@plus\linespacing}{.5\linespacing}{\normalfont\scshape\centering\S}}
\renewcommand{\PrintDOI}[1]{\doi{#1}}
\numberwithin{equation}{section}
\def\rmlabel{\upshape({\itshape \roman*\,})}
\def\cute#1{\expandafter\@cute\csname c@#1\endcsname}
\def\@cute#1{\ifcase#1
	\or $C1$%
	\or $C2$%
\fi}
\AddEnumerateCounter{\cute}{\@cute}{2}
\def\clabel{\upshape({\itshape \cute*})}
\def\l{\ifmmode\ell\else\polishlcross\fi}
\def\moverlay{\mathpalette\mov@rlay}
\def\mov@rlay#1#2{\leavevmode\vtop{   \baselineskip\z@skip \lineskiplimit-\maxdimen
   \ialign{\hfil$\m@th#1##$\hfil\cr#2\crcr}}}
\newcommand{\charfusion}[3][\mathord]{
    #1{\ifx#1\mathop\vphantom{#2}\fi
        \mathpalette\mov@rlay{#2\cr#3}
      }
    \ifx#1\mathop\expandafter\displaylimits\fi}
\DeclareFontFamily{U}  {MnSymbolC}{}
\DeclareSymbolFont{MnSyC}         {U}  {MnSymbolC}{m}{n}
\DeclareFontShape{U}{MnSymbolC}{m}{n}{
    <-6>  MnSymbolC5
   <6-7>  MnSymbolC6
   <7-8>  MnSymbolC7
   <8-9>  MnSymbolC8
   <9-10> MnSymbolC9
  <10-12> MnSymbolC10
  <12->   MnSymbolC12}{}
\DeclareMathSymbol{\powerset}{\mathord}{MnSyC}{180}
\let\epsilon=\varepsilon
\let\eps=\epsilon
\def\WW0{{\mathcal W}_0}
\def\C{\mathscr{C}}
\theoremstyle{plain}
\newtheorem{thm}{Theorem}[section]
\newtheorem{prop}[thm]{Proposition}
\newtheorem{clm}[thm]{Claim}
\newtheorem{lemma}[thm]{Lemma}
\newtheorem{cor}[thm]{Corollary}
\newtheorem{obs}[thm]{Observation}
\theoremstyle{definition}
\newtheorem{rem}[thm]{Remark}
\newtheorem{defi}[thm]{Definition}
\newtheorem{exmp}[thm]{Example}
\begin{document}

\begin{abstract}
	Given an integer $k \geq 2$ and a real number $\gamma\in [0, 1]$, 
	which graphs of edge density~$\gamma$ contain the largest 
	number of $k$-edge stars? For $k=2$ Ahlswede and Katona
	proved that asymptotically there cannot be more such stars than in a clique 
	or in the complement of a clique (depending on the value of $\gamma$).
	Here we extend their result to all integers $k\ge 2$.
\end{abstract}
\title[Maximum star densities]{Maximum star densities}

\author[Christian Reiher]{Christian Reiher}
\address{Fachbereich Mathematik, Universit\"at Hamburg,
  Bundesstra\ss{}e~55, D-20146 Hamburg, Germany}
\email{Christian.Reiher@uni-hamburg.de}

\author[Stephan Wagner]{Stephan Wagner}
\address{Department of Mathematical Sciences, 
	Stellenbosch University, Private Bag X1,
	Matieland 7602, South Africa}
\email{swagner@sun.ac.za}

\thanks{This material is based upon work
    supported by the National Research Foundation of South Africa under grant number 96236.}

\keywords{homomorphism densities, stars, graphons}
\subjclass[2010]{05C35}

\maketitle

\section{Introduction} 
\label{sec:intro}
Ahlswede and Katona~\cite{AK} wondered about the maximal number of cherries ($3$-vertex stars)
a graph may contain when the numbers of its vertices and edges are known. 
They obtained the complete answer to this question describing the extremal graphs for 
any pair consisting of a number of vertices and a number of edges. 
Roughly speaking, they found that the extremal 
graphs are in some sense close to being either cliques or complements of cliques. 

For their precise statement, we need to define quasi-complete graphs and quasi-stars. Given
nonnegative integers $n$ and $m$ with $0 \leq m \leq \binom{n}{2}$, the {\it quasi-complete}
graph with $n$ vertices and $m$ edges is constructed as follows:
\begin{itemize}
\item Write $m = \binom{a}{2} + b$, where $0 \leq b < a$.
\item Take a complete graph with $a$ vertices.
\item Add another vertex and attach it to $b$ of the previous vertices.
\item Add $n-a-1$ isolated vertices.
\end{itemize}
On the other hand, the {\it quasi-star} with $n$ vertices and $m$ edges can be obtained as the 
complement of the quasi-complete graph with $n$ vertices and $\binom{n}{2} - m$ edges.

\begin{thm}[\cite{AK}]\label{thm:ahlswede-katona}
Among all graphs $G$ with a given number $n$ of vertices and a given number $m$ of edges, the 
number of cherries is either maximized by the quasi-complete graph or the quasi-star.
\end{thm}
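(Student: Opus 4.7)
The plan is to rewrite the cherry count as $c(G) = \sum_{v \in V(G)} \binom{d(v)}{2}$, so the task becomes maximising $\sum_v d(v)^2$ subject to $\sum_v d(v) = 2m$. A short computation shows that $c(G) - c(\overline{G})$ depends only on $n$ and $m$, which matches the fact that quasi-complete and quasi-star graphs with the same vertex count are complements of each other (with $m$ and $\binom{n}{2} - m$ edges interchanged). This lets me reduce to one of the two regimes $m \le \binom{n}{2}/2$ or $m \ge \binom{n}{2}/2$, should it be convenient.

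The main reduction will be a Kelmans-type compression. For any two vertices $u,v$ of $G$, let $A = N(u) \cap N(v)$, $B = N(u) \setminus N(v)$ and $C = N(v) \setminus N(u)$ (with $u,v$ themselves removed), and pass to the graph $G'$ in which every edge $vw$ with $w \in C$ is rerouted to become $uw$. No degree outside $\{u,v\}$ changes, the pair $(d(u),d(v))$ becomes $(d(u)+|C|,\,d(v)-|C|)$, and a direct expansion yields
\[
    c(G') - c(G) \;=\; |B|\cdot|C| \;\ge\; 0,
\]
with equality exactly when $N(u)$ and $N(v)$ are already comparable. Iterating until no further strict gain is available shows that an extremal $G$ may be assumed to be a threshold graph, i.e.\ a graph whose vertex neighbourhoods are linearly ordered by inclusion.

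To finish, I would induct on $n$. A threshold graph on $n \ge 2$ vertices always contains either an isolated or a dominating vertex, and by the complementation symmetry of the first paragraph we may assume the former. Deleting such a vertex preserves both $m$ and $c(G)$, so by the induction hypothesis the resulting $(n-1)$-vertex threshold graph is quasi-complete or quasi-star with $m$ edges. Re-attaching the isolated vertex sends the $(n-1)$-vertex quasi-complete to the $n$-vertex quasi-complete directly from the definition, and, using that a threshold graph is uniquely determined by its degree sequence, a short case analysis shows that the same operation on the quasi-star also lands in the named extremal pair for $(n,m)$. I expect the main obstacle to be precisely this last verification: the Kelmans step only narrows the problem down to the still-rich class of threshold graphs, so picking out the two named families within it is where the combinatorial bookkeeping lives, and the complementation symmetry will have to be used carefully to handle the small- and large-$m$ regimes in a uniform way.
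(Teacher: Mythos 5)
The paper itself does not prove Theorem~\ref{thm:ahlswede-katona}; it quotes it from the Ahlswede--Katona paper, so your argument has to stand on its own. Its first half does: writing $c(G)=\sum_v\binom{d(v)}{2}$, the observation that $c(G)-c(\overline G)$ depends only on $n$ and $m$, and the Kelmans-type shift with gain $c(G')-c(G)=|B|\cdot|C|$ are all correct, and they show that every extremal graph is a threshold graph. This is essentially the reduction of \cite{AK}*{Lemma 2} that the paper's Remark alludes to. The genuine content of the theorem lies beyond this reduction, and that is exactly where your sketch has a gap.

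Two problems occur in your final paragraph. First, the induction hypothesis only says that the maximum for $(n-1,m)$ is \emph{attained} by the quasi-complete graph or the quasi-star, not that every extremal graph is one of them; so you may not conclude that $G-v$ \emph{is} quasi-complete or quasi-star, only the value identity $\max_{(n,m)}c=\max\bigl(c(QC_{n-1,m}),c(QS_{n-1,m})\bigr)$, where $QC_{n,m}$ and $QS_{n,m}$ denote the quasi-complete graph and the quasi-star with $n$ vertices and $m$ edges. This slip would be harmless if your re-attachment claim were true, but it is false: adding an isolated vertex to $QS_{n-1,m}$ is in general \emph{not} one of the two named $n$-vertex graphs. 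For $n=6$, $m=7$, the graph $QS_{5,7}$ is the join of an edge with three isolated vertices, so $QS_{5,7}$ plus an isolated vertex has degree sequence $(4,4,2,2,2,0)$, while $QC_{6,7}$ and $QS_{6,7}$ have degree sequences $(4,3,3,3,1,0)$ and $(5,3,2,2,1,1)$; these are three pairwise non-isomorphic threshold graphs (each with $15$ cherries, so the value comparison only just survives). Hence the advertised ``short case analysis via degree sequences'' cannot exist, and what your induction actually requires is the inequality $c(QS_{n-1,m})\le\max\bigl(c(QC_{n,m}),c(QS_{n,m})\bigr)$ for all $m\le\binom{n-1}{2}$ (the quasi-complete side is trivial, since $QC_{n,m}$ is $QC_{n-1,m}$ plus an isolated vertex). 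That is a genuine quantitative comparison of the two explicit families---essentially the part of Ahlswede and Katona's argument responsible for the nontrivial switching value $R$ discussed after the theorem---and your proposal offers no proof of it.
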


Moreover, Ahlswede and Katona showed that there exists a nonnegative integer $R$ 
such that the quasi-star is extremal for $0 \leq m \leq \frac12 \binom{n}{2} - R$ or 
$\frac12 \binom{n}{2} \leq m < \frac12 \binom{n}{2} + R$, while the quasi-complete graph is 
extremal for $\frac12 \binom{n}{2} - R < m \leq \frac12 \binom{n}{2}$ or 
$\frac12 \binom{n}{2} + R \leq m \leq \binom{n}{2}$. The value of $R$ depends on $n$ in a rather nontrivial way,
but it can be shown that $R = O(n)$, and the distribution of $\frac{R}{n}$ can be characterised---see \cites{AFNW,WW} for details.

Note that the number of cherries in a graph $G$ can be expressed as
\[
	\sum_{v \in V(G)} \binom{d(v)}{2}\,.
\]
By the handshake lemma, this is equal to
\begin{equation}\label{eq:degsq}
\frac12 \sum_{v \in V(G)} d(v)^2 - |E(G)|\,.
\end{equation}
Thus maximizing the number of cherries, given the number of vertices and edges, 
is equivalent to maximizing the sum of the squared degrees.

It is not difficult to deduce the following asymptotic version of 
Theorem~\ref{thm:ahlswede-katona}:

\begin{thm}\label{thm:ak-asymp}
Given nonnegative integers $n$ and $m$, the maximum number of cherries in a graph with $n$ 
vertices and $m$ edges is
\[
	\max \big( \gamma^{3/2}, \eta+(1-\eta)\eta^2 \big) \frac{n^3}{2} + O(n^2)\,,
\]
where $\gamma = m/\binom{n}{2}$ is the edge density and $\eta=1-\sqrt{1-\gamma}$.
\end{thm}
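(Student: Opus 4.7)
The plan is to derive Theorem~\ref{thm:ak-asymp} directly from Theorem~\ref{thm:ahlswede-katona}: since the maximum cherry count is always attained either by the quasi-complete graph or by the quasi-star, it suffices to compute the cherry count of each with an error of $O(n^2)$ and then take the larger value. The whole argument is essentially bookkeeping, with no conceptual hurdles beyond careful tracking of error terms.

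For the quasi-complete graph I would first write $m = \binom{a}{2} + b$ with $0 \le b < a$. Since $m = \gamma\binom{n}{2}$, a short computation yields $a = \sqrt{\gamma}\,n + O(1)$. The degree sequence consists of $b$ vertices of degree $a$, $a - b$ vertices of degree $a - 1$, one vertex of degree $b$, and $n - a - 1$ isolated vertices. The contributions of the latter two classes to $\sum_v \binom{d(v)}{2}$ are $O(n^2)$, while the first two classes contribute
\[
b\binom{a}{2} + (a - b)\binom{a-1}{2} = \tfrac{a^3}{2} + O(n^2) = \gamma^{3/2}\,\tfrac{n^3}{2} + O(n^2).
\]

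For the quasi-star I would read off the degree sequence from that of its complement. Writing $\binom{n}{2} - m = \binom{a'}{2} + b'$ with $0 \le b' < a'$, we have $a' = \sqrt{1-\gamma}\,n + O(1) = (1-\eta)n + O(1)$. Complementing the quasi-complete structure shows that the quasi-star has $n - a' - 1$ vertices of degree $n - 1$, one vertex of degree $n - b' - 1$, $b'$ vertices of degree $n - a' - 1$, and $a' - b'$ vertices of degree $n - a'$. Keeping only the two leading contributions (the class of size $n-a'-1$ with degree $n-1$, and the combined class of size $a'$ with degree $\approx n-a'$), the cherry count becomes
\[
(n - a' - 1)\binom{n-1}{2} + a'\binom{n-a'}{2} + O(n^2) = \bigl(\eta + (1-\eta)\eta^2\bigr)\tfrac{n^3}{2} + O(n^2).
\]

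Taking the larger of these two expressions, as sanctioned by Theorem~\ref{thm:ahlswede-katona}, completes the argument. There is no substantive obstacle; the only mild care required is to verify that substituting $a = \sqrt{\gamma}\,n + O(1)$ into a cubic polynomial in $a$, and likewise $a' = (1-\eta)n + O(1)$ into cubic expressions in $a'$ and $n - a'$, truly produces an error of only $O(n^2)$.
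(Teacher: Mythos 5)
Your argument is correct and is exactly the deduction the paper has in mind: the paper offers no written proof of Theorem~\ref{thm:ak-asymp}, merely noting it is ``not difficult to deduce'' from Theorem~\ref{thm:ahlswede-katona}, and your computation of the cherry counts $b\binom{a}{2}+(a-b)\binom{a-1}{2}=\gamma^{3/2}\tfrac{n^3}{2}+O(n^2)$ for the quasi-complete graph and $(n-a'-1)\binom{n-1}{2}+a'\binom{n-a'}{2}+O(n^2)=\bigl(\eta+(1-\eta)\eta^2\bigr)\tfrac{n^3}{2}+O(n^2)$ for the quasi-star, followed by taking the maximum, is precisely that intended bookkeeping. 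The error analysis (uniformity in $\gamma\in[0,1]$ of $a=\sqrt{\gamma}\,n+O(1)$ and $a'=(1-\eta)n+O(1)$) is sound, so nothing is missing.
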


Our aim in this paper is to obtain an analogue of this asymptotic statement for arbitrary stars. 
While there is no exact identity of the form~\eqref{eq:degsq}, maximizing the number of $k$-leaf 
stars $S_k$ is still asymptotically equivalent to maximizing the $k^{\text{th}}$ degree moment: 
this is because the number of $k$-leaf stars (to be precise, the number of---not necessarily 
induced---subgraphs isomorphic to $S_k$) in a graph $G$ can be expressed as
\[
	\sum_{v \in V(G)} \binom{d(v)}{k}\,.
\]
Up to a factor $k!$, this is the number of injective homomorphisms from $S_k$ to $G$. The total number of homomorphisms from $S_k$ to $G$ is exactly the $k^{\text{th}}$ degree moment
\[
	\sum_{v \in V(G)} d(v)^k\,.
\]

Natural languages to talk about asymptotic graph theoretical statements are provided 
by Razborov's theory of flag algebra homomorphisms~\cite{RazF}, and by Lov\'{a}sz et al.'s 
theory of graphons, which is  nicely explained in Lov\'{a}sz' recent research 
monograph~\cite{Lov-LNGN}. 
For concreteness we shall work with the latter approach in this article. 

Recall that a {\it graphon} is a measurable symmetric function 
$W\colon [0,1]^2\longrightarrow [0,1]$; here the word ``measurable'' should be understood 
either in the sense of the Borel $\sigma$-algebra, or with respect to Lebesgue measurable 
sets, but it is immaterial for our concerns which one of these two interpretations one 
actually adopts; the demand that $W$ be ``symmetric'' just means that $W(x, y)=W(y, x)$ 
is required to hold for all $x, y\in [0,1]$. The space of all graphons is denoted by $\WW0$. 
The main quantities studied in extremal graph theory are so-called homomorphism densities, 
defined as follows: Given a graph $H$ and a graphon $W$ one stipulates
\[
	t(H, W)=\int_{[0,1]^{V(H)}}\prod_{ij\in E(H)}W(x_i, x_j)\prod_{i\in V(H)}\mathrm{d}x_i\,,
\]
and calls $t(H, W)$ the {\it homomorphism density} from $H$ to $W$. 

In order to formulate the result alluded to above we call a graphon $W$ a {\it clique} 
if modulo null sets it is the characteristic function of a quadratic set of the form 
$A\times A$ for some measurable $A\subseteq [0,1]$, and $W$ is said to be an {\it anticlique} 
if $1-W$ is a clique; further, we let the pictorial symbols ``$|$'' and ``$\wedge$'' denote 
the graphs on two vertices with one edge, and the graph on three vertices with two edges, 
respectively. Now what Ahlswede and Katona proved yields in the limit that among all 
graphons~$W$ for which $t(\,|\,, W)$ has some fixed value, those for which $t(\wedge, W)$ 
is maximal are either cliques or anticliques. Thus
\[
	t(\wedge, W)\le \max\left(\gamma^{3/2}, \eta+(1-\eta)\eta^2\right)\,,
\]
where $\gamma=t(\,|\,, W)$ and $\eta=1-\sqrt{1-\gamma}$, is the best possible inequality 
in this regard; one may observe that the clique yields a larger value of $t(\wedge, \cdot)$ 
if $\gamma>\tfrac12$, while the anticlique is better if $\gamma<\tfrac12$; interestingly, 
if $\gamma=\tfrac12$, there are, up to weak isomorphism in the sense 
of~\cite{Lov-LNGN}*{Chap~7.3}, exactly two extremal graphons. 

The question to find for a fixed graph $H$ and a fixed $\gamma\in [0, 1]$ 
the {\it maximum} value of~$t(H, W)$ as $W$ varies through ${\mathcal W}_0$ 
under the constraint $t(\,|\,, W)=\gamma$ is, of course, interesting in general; 
we recall that if $H$ is a clique, the answer is 
well known by a theorem proved independently by Kruskal~\cite{Krusk} and by Katona~\cite{Kat}. 
In its full generality their theorem speaks about hypergraphs; in the $2$-uniform case it 
tells us that 
\[
	t(K_r, W)\le t(\,|\,, W)^{r/2}\,
\]
holds for all integers $r\ge 2$ and all graphons $W$. 

The opposite question about the {\it minimum} possible value of $t(H, W)$ for fixed 
$t(\,|\,,W)$ has also been studied in the literature. 
There are many such results for bipartite 
graphs~$H$ making partial progress on Sidorenko's conjecture, which states that
$$t(H, W) \geq t(\,|\,,W)^{|E(H)|}$$
holds for all bipartite graphs $H$, see e.g.~\cites{LogCalc, KLL, Szeg, CoLe} for some recent contributions.

For non-bipartite graphs, 
the answer only seems to be known when~$H$ is a clique. In this case the answer is given by 
the clique density theorem from~\cite{CDT}, which was proved earlier for 
triangles by Razborov~\cite{RazT} and for cliques of order four by Nikiforov~\cite{Niki-4}.

We shall prove in this article that replacing ``$\wedge$'' by the star $S_k$ 
with $k$ edges one can still get the same qualitative conclusion, while the case distinction 
on whether a clique or an anticlique is better will depend in a different manner on $\gamma$. 
A reason as to why the case $H=S_k$ should be easier than the general case is that the 
homomorphism density~$t(S_k, W)$ may be interpreted as the $k^{\text{th}}$ moment of 
the vertex degree function. Recall that associated with each graphon $W$ one has its degree 
function $d_W\colon[0,1]\longrightarrow [0,1]$ defined by 
$d_W(x)=\int_0^1 W(x, y)\;\mathrm{d} y$ for all $x\in[0,1]$; clearly 
\[
	t(S_k, W)=\int_0^1 d^k_W(x)\; \mathrm{d} x\,.
\]

So one may ask the more general question to bound $\int_0^1 F\bigl(d_W(x)\bigr) \mathrm{d} x$ 
from above for any given function $F\colon [0,1]\longrightarrow \mathbb{R}$. 
We shall identify in Section~\ref{sec:suff} a slightly artificial condition 
(see Definition~\ref{dfn:23} below) that, 
when imposed on $F$, guarantees that the answer will again be that the extremal graphons 
are either cliques or anticliques. In other words, this means that
\begin{equation*}\label{11}
	\int_0^1 F\bigl(d_W(x)\bigr)\; \mathrm{d} x\le 
	\max\left(\bigl(1-\sqrt{\gamma}\bigr)F(0)+\sqrt{\gamma}F(\sqrt{\gamma}), 
	(1-\eta)F(\eta)+\eta F(1)\right)
\end{equation*}
will hold for all graphons $W$ with $t(\,|\,, W)=\gamma$, where again $\eta=1-\sqrt{1-\gamma}$. 
The verification of this will occupy Section~\ref{sec:cute}. 
It is not entirely obvious that power functions $x\longmapsto x^k$ do indeed satisfy 
our condition; we shall confirm this in Section~\ref{sec:power}, 
thus obtaining the following:  

\begin{thm}\label{thm:main}
	Let $W$ be a graphon and let $k$ be a positive integer. 
	Set $\gamma=t(\,|\,, W)$ and $\eta=1-\sqrt{1-\gamma}$; we have the inequality
	\begin{equation}\label{eq:SkW}
		t(S_k, W)\le \max\left(\gamma^{(k+1)/2}, \eta+(1-\eta)\eta^k\right)\,.
	\end{equation}

	Moreover, for $k\ge 2$ there is some $\gamma_k\in (0, 1)$ such that this maximum is 
	$\eta+(1-\eta)\eta^k$ for $\gamma\in [0, \gamma_k]$ and $\gamma^{(k+1)/2}$
	for $\gamma\in [\gamma_k, 1]$.
\end{thm}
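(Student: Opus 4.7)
The plan is to combine the general upper bound on $\int F(d_W(x))\,\mathrm{d}x$ announced in the introduction and proved in Section~\ref{sec:cute} with a direct analysis of the two competing candidate maxima. Since $t(S_k, W) = \int_0^1 d_W(x)^k\,\mathrm{d}x$, the first assertion should follow by applying that general bound with $F(x) = x^k$: specialising the promised estimate to this $F$ yields exactly the right-hand side of~\eqref{eq:SkW}, because $(1-\sqrt{\gamma})\cdot 0 + \sqrt{\gamma}\cdot(\sqrt{\gamma})^k = \gamma^{(k+1)/2}$ and $(1-\eta)\eta^k + \eta\cdot 1 = \eta + (1-\eta)\eta^k$. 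All that remains for~\eqref{eq:SkW} is to verify that the power function $x \mapsto x^k$ satisfies the condition of Definition~\ref{dfn:23}, which is presumably the content of Section~\ref{sec:power}; without having seen the precise condition I expect this to amount to a calculus check tailored to whatever monotonicity or convexity the condition encodes.

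For the existence of the threshold $\gamma_k$, set
\begin{equation*}
\phi(\gamma) = \eta + (1-\eta)\eta^k - \gamma^{(k+1)/2}, \qquad \eta = 1 - \sqrt{1-\gamma},
\end{equation*}
so that $\phi(\gamma) > 0$ means the anticlique bound beats the clique bound. Direct evaluation gives $\phi(0) = \phi(1) = 0$. Near $\gamma = 0$ one has $\eta \sim \gamma/2$, hence $\phi(\gamma) \sim \gamma/2 > 0$ (the clique contribution $\gamma^{(k+1)/2}$ being of order at least $\gamma^{3/2}$ for $k \ge 2$ and hence negligible). Near $\gamma = 1$, substituting $t = \sqrt{1-\gamma}$ and Taylor-expanding to second order in $t$ gives $\phi(\gamma) \sim -\tfrac{k-1}{2}(1-\gamma)$, which is strictly negative for $k \ge 2$. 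Thus $\phi$ changes sign at least once on $(0, 1)$.

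The main obstacle is to show that the sign change is unique. In the variable $t = \sqrt{1-\gamma}$ one may factor
\begin{equation*}
\phi = (1-t)\bigl[1 + t(1-t)^{k-1} - (1+t)^{(k+1)/2}(1-t)^{(k-1)/2}\bigr],
\end{equation*}
so uniqueness reduces to showing that the bracketed function $B(t)$ has exactly one zero on $(0, 1)$. The Taylor expansion above shows that $B$ has a double zero at $t = 0$ and is strictly negative just to its right, while $B(1) = 1 > 0$; to conclude that $B$ crosses zero only once it would suffice to prove that $B'$ has at most one zero on $(0, 1)$, so that $B$ is unimodal on that interval. I would attack this either by one further differentiation and a careful sign analysis of the rational remainder, or by comparing the logarithmic derivatives of the two competing terms in $B$; I expect this analytic step to be the genuinely delicate part of the argument. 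Once uniqueness is secured, $\gamma_k$ is defined as the unique zero of $\phi$ in $(0, 1)$, and the endpoint sign information immediately identifies which expression dominates on each side.
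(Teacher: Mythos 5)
For the inequality~\eqref{eq:SkW} your plan is the same as the paper's: since $t(S_k,W)=\int_0^1 d_W(x)^k\,\mathrm{d}x$, one applies the general bound for ``good'' functions to $F(x)=x^k$, and the two quantities on the right of that bound specialise exactly to $\gamma^{(k+1)/2}$ and $\eta+(1-\eta)\eta^k$; membership of $x\mapsto x^k$ in $\C$ is Proposition~\ref{prop:44}, and the bound itself is Proposition~\ref{prop:36}. Two small points you gloss over: Proposition~\ref{prop:36} only covers step graphons, so one still needs the approximation step (step graphons are dense in the cut norm and both sides of~\eqref{eq:SkW} are continuous with respect to it), and the case $k=1$ has to be handled separately (it is trivial, but $x\mapsto x$ is not covered by Proposition~\ref{prop:44}).

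The genuine gap is in the moreover-part. Your setup is correct: $\phi(0)=\phi(1)=0$, $\phi(\gamma)\sim\gamma/2>0$ near $0$, $\phi(\gamma)\sim-\tfrac{k-1}{2}(1-\gamma)<0$ near $1$, and the factorisation $\phi=(1-t)B(t)$ with $t=\sqrt{1-\gamma}$ is right, so a sign change exists and the claimed unimodality of $B$ would indeed force a unique crossing. But the single-crossing property is exactly what the theorem asserts beyond existence, and you do not prove it: you only name two possible strategies (``one further differentiation and a careful sign analysis'' or ``comparing logarithmic derivatives'') and acknowledge that this is the delicate step. As written, the threshold structure of the maximum is therefore not established. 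The paper settles this point differently: it compares squares, writing $\gamma^{k+1}-\bigl(\eta+(1-\eta)\eta^k\bigr)^2=(1-\eps)^{k+1}Q(\eps)$ with $\eps=\sqrt{1-\gamma}$ and $Q(\eps)=(1+\eps)^{k+1}-(1-\eps)^{1-k}-2\eps-\eps^2(1-\eps)^{k-1}$, expands $Q(\eps)=(k-1)\eps^2+(k-1)\eps^3-\sum_{i\ge 4}q_i\eps^i$, verifies by a binomial-coefficient estimate that $q_i\ge 0$ for all $i\ge 4$, and then deduces uniqueness of the zero of $Q$ in $(0,1)$ from a short scaling comparison of the two identities that two distinct zeros would produce (a one-sign-change, Descartes-type argument). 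To complete your version you would either have to carry out the sign analysis of $B'$ in full detail---which does not look any easier than the paper's computation---or import an argument of this series type; until then the uniqueness of $\gamma_k$ remains unproved in your proposal.
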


For $k \leq 30$, this was proven very recently by Kenyon, Radin, Ren and Sadun~\cite{KRRS} 
using a somewhat different approach; they also conjectured it to be true for arbitrary $k$. 
In another recent article, Nagy \cite{Nagy} obtained an analogous result for the density of 
another graph, namely the $4$-edge path: here it also turns out that cliques or anticliques 
(depending on $\gamma$) are extremal.

The following analogue of Theorem~\ref{thm:ak-asymp} is an immediate consequence of Theorem~\ref{thm:main}:

\begin{cor}
Given nonnegative integers $n$ and $m$ and an integer $k \geq 2$, the maximum number of copies of the star $S_k$ in a graph with $n$ vertices and $m$ edges is
$$\max \big( \gamma^{(k+1)/2}, \eta+(1-\eta)\eta^k \big) \frac{n^{k+1}}{k!} + O(n^k),$$
where $\gamma = m/\binom{n}{2}$ is the edge density and $\eta=1-\sqrt{1-\gamma}$.
\end{cor}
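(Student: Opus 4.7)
The plan is to deduce the corollary directly from Theorem~\ref{thm:main} by passing to the step graphon associated with a graph. The number of (not necessarily induced) copies of $S_k$ in a graph $G$ on $n$ vertices equals $\sum_{v \in V(G)} \binom{d(v)}{k}$, which differs from $\frac{1}{k!}\sum_v d(v)^k$ by at most $O(n^k)$ since $\binom{d}{k} = d^k/k! + O(d^{k-1})$ with an implicit constant depending only on $k$, and each $d(v) \le n-1$. The step graphon $W_G$ obtained by subdividing $[0,1]$ into $n$ equal blocks has degree function taking the value $d(v)/n$ on the block of $v$ and satisfies $t(\,|\,,W_G) = 2m/n^2$, so
\[
\frac{1}{k!}\sum_{v \in V(G)} d(v)^k = \frac{n^{k+1}}{k!}\int_0^1 d_{W_G}(x)^k\,\mathrm{d}x = \frac{n^{k+1}}{k!}\,t(S_k, W_G).
\]
Applying Theorem~\ref{thm:main} to $W_G$ bounds this by $\frac{n^{k+1}}{k!}\max\bigl(\gamma'^{(k+1)/2}, \eta'+(1-\eta')\eta'^k\bigr)$, where $\gamma' = 2m/n^2$. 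Since $\gamma' = \gamma(n-1)/n$ is within $O(1/n)$ of $\gamma = m/\binom{n}{2}$, and the function $\gamma \mapsto \max\bigl(\gamma^{(k+1)/2}, \eta+(1-\eta)\eta^k\bigr)$ is Lipschitz continuous on $[0,1]$, the discrepancy contributes an additional $O(n^{k+1}/n) = O(n^k)$ to the bound.

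For a matching lower bound, it suffices to exhibit graphs on $n$ vertices and $m$ edges that attain each of the two terms in the maximum up to an error $O(n^k)$. Writing $m = \binom{a}{2} + b$ with $0 \le b < a$, the quasi-complete graph has $b$ vertices of degree $a$, $a-b$ of degree $a-1$, one of degree $b$, and $n-a-1$ isolated vertices, hence $b\binom{a}{k} + (a-b)\binom{a-1}{k} + \binom{b}{k}$ copies of $S_k$. Since $a = \sqrt{\gamma}\,n + O(1)$, this evaluates to $\gamma^{(k+1)/2}\cdot n^{k+1}/k! + O(n^k)$. Applying the same reasoning to the quasi-star, whose complement is the quasi-complete graph with $\binom{n}{2}-m$ edges (so now $a \approx \sqrt{1-\gamma}\,n = (1-\eta)n$), the dominant contributions come from the $n-a-1$ near-universal vertices of degree $n-1$ and the $a-b$ vertices of degree $n-a$, yielding $(\eta + (1-\eta)\eta^k)\cdot n^{k+1}/k! + O(n^k)$ copies. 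Selecting whichever of the two constructions is larger completes the matching lower bound.

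The argument is thus a largely routine translation between the finite and continuous settings. The only mildly delicate point is the Lipschitz claim near $\gamma = 1$: there the singularity of $d\eta/d\gamma = 1/(2\sqrt{1-\gamma})$ must be compensated by the vanishing of $1 - k\eta^{k-1}(1-\eta) - \eta^k$, which one checks is of order $(1-\eta)^2$ as $\eta \to 1$, so that both branches of the maximum are in fact $C^1$ on $[0,1]$ with uniformly bounded derivative. Once this is in hand the error analysis is straightforward, and the two matching asymptotic lower bounds are just direct evaluations of the elementary sum of binomial coefficients above.
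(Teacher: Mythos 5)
Your argument is correct and is exactly the routine finite-to-graphon translation that the paper leaves implicit when it calls the corollary an immediate consequence of Theorem~\ref{thm:main}: upper bound via the step graphon of $G$ together with a Lipschitz correction for replacing $2m/n^2$ by $m/\binom{n}{2}$, lower bound via the quasi-complete graph and the quasi-star, which is also what the paper's subsequent remark indicates. One minor slip in your side remark: the relevant derivative is $\frac{d}{d\eta}\bigl(\eta+(1-\eta)\eta^k\bigr)=1+k\eta^{k-1}(1-\eta)-\eta^k$, which vanishes only to first order, namely like $2k(1-\eta)$ as $\eta\to 1$, so the derivative of that branch with respect to $\gamma$ tends to $k$ rather than to $0$; this still gives the uniformly bounded derivative and hence the Lipschitz bound your error analysis requires.
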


Likewise, we also have

\begin{cor}
Given nonnegative integers $n$ and $m$, the maximum of the $k^{\text{th}}$ degree moment $\sum_v d(v)^k$ in a graph with $n$ vertices and $m$ edges is
$$\max \big( \gamma^{(k+1)/2}, \eta+(1-\eta)\eta^k \big) n^{k+1} + O(n^k),$$
where $\gamma = m/\binom{n}{2}$ is the edge density and $\eta=1-\sqrt{1-\gamma}$.
\end{cor}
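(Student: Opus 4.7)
The plan is to deduce this corollary directly from the preceding one (about the number of copies of $S_k$) via a standard identity relating homomorphism counts and injective homomorphism counts. The number of unlabeled copies of $S_k$ in $G$ equals $\sum_v \binom{d(v)}{k}$, so $k!$ times that count is $\sum_v d(v)(d(v)-1)\cdots(d(v)-k+1)$, which differs from $\sum_v d(v)^k$ by a polynomial in $d(v)$ of degree at most $k-1$ in each term. Since every degree is at most $n-1$, this difference is bounded by $C_k \sum_v d(v)^{k-1} \le C_k n(n-1)^{k-1} = O(n^k)$. Multiplying the formula from the preceding corollary by $k!$ and absorbing this error gives the claim.

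If one prefers to derive the statement directly from Theorem~\ref{thm:main}, the argument is the same as for the preceding corollary. For the upper bound, associate with $G$ its step graphon $W_G$, so that $t(\,|\,, W_G) = 2m/n^2 =: \gamma'$ and $t(S_k, W_G) = n^{-(k+1)}\sum_v d(v)^k$; Theorem~\ref{thm:main} then yields the bound in terms of $\gamma'$. To convert it into the bound in terms of $\gamma = m/\binom{n}{2}$ one uses $|\gamma-\gamma'|=O(1/n)$ together with the fact that the two functions $\gamma \mapsto \gamma^{(k+1)/2}$ and $\gamma \mapsto \eta+(1-\eta)\eta^k$ are Lipschitz on $[0,1]$. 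The latter deserves a moment's thought near $\gamma=1$: the singularity $d\eta/d\gamma = (2\sqrt{1-\gamma})^{-1}$ is cancelled by the factor $1-\eta^k$ appearing in the derivative, which vanishes to the same order. After multiplying by $n^{k+1}$, this contributes only an $O(n^k)$ error.

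For the matching lower bound, one evaluates $\sum_v d(v)^k$ on the Ahlswede--Katona extremal graphs. The quasi-complete graph on $n$ vertices and $m$ edges is essentially a clique on $a \approx n\sqrt{\gamma}$ vertices plus negligibly many further edges and $n-a-1$ isolated vertices, so $\sum_v d(v)^k \approx a(a-1)^k \approx \gamma^{(k+1)/2}n^{k+1}$. The quasi-star is the complement of a quasi-complete graph with $\binom{n}{2}-m$ edges, hence has roughly $n(1-\eta)$ vertices of degree approximately $n\eta$ together with $n\eta$ vertices of near-maximum degree $n-1$, yielding $\sum_v d(v)^k \approx n(1-\eta)(n\eta)^k + n\eta\cdot(n-1)^k \approx (\eta+(1-\eta)\eta^k)n^{k+1}$, with error $O(n^k)$ in both cases. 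The whole argument amounts to routine bookkeeping of $O(n^k)$ error terms, and the only part requiring any thought at all is the Lipschitz claim near $\gamma=1$.
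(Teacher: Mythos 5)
Your argument is correct, and it supplies exactly the details the paper leaves implicit by presenting this corollary (like the preceding one) as an immediate consequence of Theorem~\ref{thm:main}: the upper bound via the step graphon of $G$ with $t(S_k,W_G)=n^{-(k+1)}\sum_v d(v)^k$ and $t(\,|\,,W_G)=2m/n^2$, plus the quasi-complete graph and quasi-star for the matching lower bound (your first reduction, from the $S_k$-count corollary via $k!\binom{d}{k}=d^k+O(d^{k-1})$, works just as well). One small simplification: since $2m/n^2\le\gamma$ and both $\gamma\mapsto\gamma^{(k+1)/2}$ and $\gamma\mapsto\eta+(1-\eta)\eta^k$ are nondecreasing, the Lipschitz discussion near $\gamma=1$ is not even needed for the upper bound, though your verification of it is also correct.
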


\begin{rem}
The quasi-complete graph and the quasi-star attain the bound asymptotically, but it is worth pointing out that they are not always the graphs for which the maximum number of copies of $S_k$ is attained. For example, if $k=3$, $n=13$ and $m=61$, then neither the quasi-complete graph nor the quasi-star contains the greatest number of copies of the star $S_3$: the quasi-complete graph has $1610$ copies, the quasi-star $1620$. Now consider the following graph:
\begin{itemize}
\item Start with a complete $11$-vertex graph and select three of its vertices, $v_1,v_2,v_3$.
\item Now add two more vertices $w_1,w_2$ and all six possible edges between $v_i$ and $w_j$ ($1 \leq i \leq 3$, $1 \leq j \leq 2$).
\end{itemize}
This graph has $13$ vertices and $61$ edges and contains $1622$ copies of $S_3$, which is in fact the maximum as we verified by means of a computer programme. An exhaustive search is possible, since the argument of \cite[Lemma 2]{AK} shows that the maximum is always attained by a graph whose vertices can be ordered as $v_1,v_2,\ldots,v_n$ in such a way that the following holds: if there is an edge between $v_i$ and $v_j$, then there is also an edge between $v_k$ and $v_l$ whenever $k \leq i$ and $l \leq j$, $k \neq l$.

With another computer search, we also found that the same graph has the greatest third degree moment (number of homomorphisms from $S_3$) at $13238$, as opposed to the quasi-complete graph and the quasi-star with $13202$ and $13172$ respectively.
\end{rem}

\section{A sufficient condition} 
\label{sec:suff}

Let us say that a measurable function $F\colon [0,1]\longrightarrow \mathbb{R}$ is {\it good} 
for a graphon $W$ if 
\[
	\int_0^1 F\bigl(d_W(x)\bigr) \mathrm{d} x\le 
	\max\left(\bigl(1-\sqrt{\gamma}\bigr)F(0)+\sqrt{\gamma}F(\sqrt{\gamma}), 
	(1-\eta)F(\eta)+\eta F(1)\right)
\]
holds, where $\gamma=t(\,|\,,W)$ and $\eta=1-\sqrt{1-\gamma}$. With this terminology we are 
interested in a condition for $F$ implying that it will be good for all graphons. 

A natural demand on the function $F$ is that it should be convex. Indeed this makes it 
more likely that quantities such as the left side of the above formula attain their 
maxima in fairly extreme situations, as we wish. Conversely, convexity already allows us 
to deal with an easy case.

\goodbreak

\begin{obs}\label{obs:21} 
	Convex function are good for all constant graphons.
\end{obs}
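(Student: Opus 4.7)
The plan is to observe that a constant graphon $W\equiv c$ makes the degree function constant as well, namely $d_W\equiv c$, so that $\gamma=t(\,|\,,W)=c$ and the left side of the inequality to be proved collapses to $F(c)$. Therefore the task is to show that $F(c)$ lies below at least one of the two quantities inside the maximum on the right.

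I would produce this by exhibiting $c$ as a convex combination of the endpoints appearing in each of the two expressions and invoking convexity of $F$. For the first expression, $c=(1-\sqrt{c})\cdot 0+\sqrt{c}\cdot\sqrt{c}$ is a convex combination of $0$ and $\sqrt{c}$ with weights summing to $1$, and convexity immediately yields $F(c)\le(1-\sqrt{c})F(0)+\sqrt{c}F(\sqrt{c})$. For the second, a short computation using $\eta=1-\sqrt{1-c}$ gives $(1-\eta)^2=1-c$, hence $c=2\eta-\eta^2=(1-\eta)\eta+\eta\cdot 1$, so that $c$ is a convex combination of $\eta$ and $1$ with weights $1-\eta$ and $\eta$; convexity then gives $F(c)\le(1-\eta)F(\eta)+\eta F(1)$.

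Either bound alone suffices, so certainly the maximum of the two does, which is what the definition of ``good'' demands. There is no real obstacle here: the entire argument rests on recognising two explicit convex-combination identities for $c$, and I expect the write-up to be a few lines at most.
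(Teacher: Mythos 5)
Your argument is correct and is essentially the same as the paper's: identify the left side with $F(\gamma)$ for the constant graphon and bound it via convexity (Jensen) using the convex-combination identities $\gamma=(1-\sqrt{\gamma})\cdot 0+\sqrt{\gamma}\cdot\sqrt{\gamma}$ and $\gamma=2\eta-\eta^2=(1-\eta)\eta+\eta\cdot 1$. The paper notes either bound suffices, exactly as you do.
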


Indeed, if $W$ is constant always attaining the value $\gamma\in[0,1]$, 
then $t(\,|\,, W)=\gamma$ and
\[
	\int_0^1 F\bigl(d_W(x)\bigr) \mathrm{d} x=F(\gamma)\le 
	\bigl(1-\sqrt{\gamma}\bigr)F(0)+\sqrt{\gamma}F(\sqrt{\gamma}) 
\]
follows from Jensen's inequality. Notice that we could have verified
\[
	F(\gamma)\le (1-\eta)F(\eta)+\eta F(1)
\]
in the same way, for $\gamma=2\eta-\eta^2$.

Optimistically one might hope that all convex functions are good for all graphons, 
but unfortunately this is not the case, as the following construction demonstrates:
 
\begin{exmp}\label{ex:22}
	Let $F$ satisfy $F(0)=F\bigl(\tfrac15\bigr)=0$, $F\bigl(\tfrac35\bigr)= 1$, 
	$F(1)= 3$, and let $F$ be piecewise linear in between. Note that this function is convex. 
	Now look at $\gamma=\tfrac9{25}$, for which we have
	\[
		\max\left(\bigl(1-\sqrt{\gamma}\bigr)F(0)+\sqrt{\gamma}F(\sqrt{\gamma}), 
			(1-\eta)F(\eta)+\eta F(1)\right) = \tfrac35\,.
	\]

	Let $W$ be the characteristic function of $A\times A\cup A\times B\cup B\times A$, 
	where $A, B\subseteq [0,1]$ are disjoint. If the Lebesgue measures $y = \lambda(A)$ 
	and $z = \lambda(B)$ are chosen in such a way that $y+z\le 1$ and 
	$y^2+2yz=\gamma = \tfrac9{25}$, then the graphon $W$ satisfies $t(\,|\,, W)=\gamma$. 
	Thus we should have
	\[
		\int_0^1 F\bigl(d_W(x)\bigr) \mathrm{d} x = yF(y+z)+zF(y)\le \tfrac35
	\]
	for all possible choices of $y$ and $z$, but this is wrong, except for boundary cases.

	Indeed, this would mean that if $\tfrac15\le y\le \tfrac35$ and 
	$z=\tfrac{\gamma-y^2}{2y}$, we should have (note here that $\tfrac35\le y+z\le 1$)
	\[
		yF(y+z)+zF(y) = y \cdot (5y+5z-2)+z\cdot\tfrac{5y-1}{2}\le \tfrac35\,.
	\]
	It is plain that this fails e.g. for $y=\tfrac25$ and $z=\tfrac14$, for then the left side 
	of the inequality is~$\tfrac58$, which is greater than~$\tfrac35$. 
	One could show that any other choice of 
	$y\in\bigl(\tfrac15,\tfrac35\bigr)$ leads to a counterexample as well. 
	Also, one could modify $F$ slightly, replacing it by another function~$F^*$ that is
	differentiable infinitely often while $\|F-F^*\|_{\infty}$ is kept small. 
	In this way one can construct smoother counterexamples.
\end{exmp}

So we have to impose stronger conditions on $F$ than just convexity. The following definition, which might look artificial at first glance, provides us exactly with what we need:

\begin{defi}\label{dfn:23} 
	Let $\C$ denote the class of all twice differentiable convex functions 
	${F\colon [0,1]\longrightarrow\mathbb{R}}$ 
	satisfying the following two conditions.
	\begin{enumerate}[label=\clabel] 
	\item\label{it:C1} For all $a, b, y\in[0,1]$ with $a<y<b$ and
		\[
			\frac{F(b)-F(y)}{b-y}-\frac{F(y)-F(a)}{y-a}=F'(b)-F'(y)\,,
		\]
		we have
		\begin{align*}
			2&(b-y)\left[F(b)-(b-y)F'(b)+\tfrac12(b-y)^2F''(b)-F(y)\right] \\
			+&(y-a)\left[F(b)-(b-y)F'(y)+(b-y)^2F''(y)-F(y)\right]>0\,,
		\end{align*}
		and
	\item\label{it:C2} for all $a,b,y\in [0,1]$ with $a<y<b$ and
		\[
			\frac{F(b)-F(y)}{b-y}-\frac{F(y)-F(a)}{y-a}=F'(y)-F'(a)\,,
		\]
		we have
		\begin{align*}
			2&(y-a)\left[F(a)+(y-a)F'(a)+\tfrac12(y-a)^2F''(a)-F(y)\right] \\
			+&(b-y)\left[F(a)+(y-a)F'(y)+(y-a)^2F''(y)-F(y)\right]>0\,.
		\end{align*}
	\end{enumerate} 
\end{defi}

As we shall see in the next section, functions in $\C$ are good for 
all step graphons. Here, a {\it step graphon} is a symmetric function 
$W\colon [0,1]^2\longrightarrow [0,1]$ for which there exists a partition 
${\mathcal P}=\{P_1, \ldots, P_k\}$ of the unit interval into a finite number of 
measurable pieces such that~$W$ is constant on each rectangle of the 
form $P_i\times P_j$. 
It is known that the collection of all step graphons is dense in $\WW0$ with respect to 
the $L^1$-distance. So if $F$ is sufficiently well behaved and good for all 
step graphons, then by standard approximation arguments~$F$ is automatically good for 
all graphons. E.g., it suffices to assume that $F$ be continuously differentiable on $[0,1]$. 

\section{The main result on the class \texorpdfstring{$\C$}{\it C}} 
\label{sec:cute}

The principal goal of this section is to understand why the functions in $\C$ 
are good for all step graphons, cf. Proposition~\ref{prop:36} below. 
To prepare the proof of this assertion, we collect several lemmata about the 
functions in this class.  
The first of them informs us that~$\C$ is closed under several 
operations naturally appearing in our argument.

\begin{lemma}\label{lem:31} 
	Let $F\colon [0,1]\longrightarrow\mathbb{R}$ belong to $\C$.
	\begin{enumerate}[label=\rmlabel]
	\item\label{it:1} For all $A, B\in\mathbb{R}$, the function $x\longmapsto F(x)+A+Bx$ 
		belongs to $\C$ as well.
	\item\label{it:2} The function $G\colon [0,1]\longrightarrow\mathbb{R}$ given by 
		$x\longmapsto F(1-x)$ is also in $\C$.
	\item\label{it:3} For all real numbers $r$ and $s$ with $0\le r<s\le 1$, the function 
		$H\colon [0,1]\longrightarrow\mathbb{R}$ given by 
		${x\longmapsto F\bigl(r+(s-r)x\bigr)}$ is in $\C$.
	\end{enumerate}
\end{lemma}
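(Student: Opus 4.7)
The plan is to verify each of the three closure properties by a direct change of variables and observe that the defining equalities and inequalities of \ref{it:C1} and \ref{it:C2} are preserved (possibly with the two conditions getting swapped). Throughout, twice differentiability and convexity carry over trivially, since $(F+A+Bx)''=F''$, $(x\mapsto F(1-x))''=F''(1-\cdot)\ge 0$, and $(x\mapsto F(r+(s-r)x))''=(s-r)^2F''(r+(s-r)\cdot)\ge 0$.

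For part~\ref{it:1}, set $G(x)=F(x)+A+Bx$. The additive constant $A$ cancels in every difference $G(\cdot)-G(\cdot)$, and the contribution of $Bx$ to the divided differences $\tfrac{G(b)-G(y)}{b-y}$ and $\tfrac{G(y)-G(a)}{y-a}$ is $B$ in each case, hence cancels in the subtraction on the left-hand side of each hypothesis equation. Since $G'=F'+B$ likewise cancels the additive constant, the hypotheses of \ref{it:C1} and~\ref{it:C2} for $G$ reduce to those for $F$. For the conclusions one checks that each square bracket is unchanged by the substitution $F\mapsto G$: in, say, the first bracket of~\ref{it:C1} one gets an extra term $Bb-(b-y)B-By=0$, and similarly for the other three brackets. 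So the inequalities for $G$ are identical to those for $F$.

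For part~\ref{it:2}, set $G(x)=F(1-x)$, so that $G'(x)=-F'(1-x)$ and $G''(x)=F''(1-x)$. Given $a<y<b$, put $a^*=1-b$, $y^*=1-y$, $b^*=1-a$, so that $a^*<y^*<b^*$. A short calculation gives
\[
\frac{G(b)-G(y)}{b-y}=-\frac{F(y^*)-F(a^*)}{y^*-a^*}, \qquad \frac{G(y)-G(a)}{y-a}=-\frac{F(b^*)-F(y^*)}{b^*-y^*},
\]
and $G'(b)-G'(y)=F'(y^*)-F'(a^*)$. Hence the hypothesis of~\ref{it:C1} for $G$ at $(a,y,b)$ becomes exactly the hypothesis of~\ref{it:C2} for $F$ at $(a^*,y^*,b^*)$. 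Plugging in $b-y=y^*-a^*$, $y-a=b^*-y^*$, and the identifications above into the two square brackets of~\ref{it:C1} for $G$, the signs of the $F'$-terms flip and one obtains verbatim the two brackets of~\ref{it:C2} for $F$; the multiplicative coefficients $2(b-y)$ and $(y-a)$ translate into $2(y^*-a^*)$ and $(b^*-y^*)$, matching~\ref{it:C2}. The same computation with the roles of~\ref{it:C1} and~\ref{it:C2} exchanged shows that \ref{it:C2} for $G$ reduces to~\ref{it:C1} for $F$. Thus $G\in\C$.

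For part~\ref{it:3}, write $\lambda=s-r>0$ and $H(x)=F(r+\lambda x)$, so that $H'=\lambda F'(r+\lambda\cdot)$ and $H''=\lambda^2 F''(r+\lambda\cdot)$. For $a<y<b$ in $[0,1]$ let $a^*=r+\lambda a$, etc. Then $b-y=(b^*-y^*)/\lambda$, and each divided difference for $H$ equals $\lambda$ times the corresponding divided difference for $F$ at the starred points; moreover $H'(b)-H'(y)=\lambda(F'(b^*)-F'(y^*))$. So, after dividing by $\lambda$, the hypothesis of~\ref{it:C1} (resp.~\ref{it:C2}) for $H$ is the hypothesis of~\ref{it:C1} (resp.~\ref{it:C2}) for $F$ at $(a^*,y^*,b^*)$. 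In the first bracket of the conclusion, the factor $(b-y)$ combined with $H'(b)=\lambda F'(b^*)$ produces $(b^*-y^*)F'(b^*)$, and $(b-y)^2 H''(b)$ produces $(b^*-y^*)^2 F''(b^*)$, giving precisely the bracket in~\ref{it:C1} for $F$; the overall coefficient $2(b-y)=2(b^*-y^*)/\lambda$ just contributes a global positive factor $1/\lambda$, which does not affect the sign. The same works for the second bracket and for~\ref{it:C2}, establishing $H\in\C$.

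There is no real obstacle: each part is a routine verification that $\C$ is invariant under the relevant change of variables. The only point demanding care is in~\ref{it:2}, where one must notice that reflection does not send \ref{it:C1} to itself but rather interchanges \ref{it:C1} with \ref{it:C2}, which is nevertheless sufficient for membership in $\C$.
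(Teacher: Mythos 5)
Your proof is correct and follows essentially the same route as the paper: linear terms cancel in both hypothesis and conclusion for \ref{it:1}, the reflection $x\mapsto 1-x$ interchanges \ref{it:C1} and \ref{it:C2} via the substitution $a^*=1-b$, $y^*=1-y$, $b^*=1-a$, and the affine substitution $x\mapsto r+(s-r)x$ preserves each condition up to a positive factor. The paper states these verifications more tersely; you have simply written out the same change-of-variables computations in detail, and they check out.
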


\begin{proof} 
	The first part follows from the fact that neither the assumption nor the conclusion 
	of~\ref{it:C1} or~\ref{it:C2} change when a linear function is added to $F$. 
	Further, $G$ and $H$ are convex and satisfy the requested differentiability condition. 
	To see that $G$ satisfies~\ref{it:C1} for all numbers $a<y<b$ we apply~\ref{it:C2} for $F$ to 
	$1-b<1-y<1-a$ and vice versa. This shows that $G$ is indeed in $\C$. 
	To check similarly that $H$ satisfies~\ref{it:C1} or~\ref{it:C2} for all numbers 
	$a<y<b$, one applies the same property of $F$ to the numbers $r+(s-r)a<r+(s-r)y<r+(s-r)b$. 
\end{proof}
 
Our next steps are directed towards showing that the class of all graphons for which all
functions in $\C$ are good is likewise closed under some operations that occur later on. 
The first of these assertions, a rather direct consequence of part~\ref{it:2} from the 
foregoing lemma, does not carry the induction further by itself, but it will allow us to 
reduce one of two seemingly different cases to the other one. 
 
\begin{lemma}\label{lem:37}
	If all functions in $\C$ are good for a graphon $W$, then the same is true for
	the graphon $1-W$. 
\end{lemma}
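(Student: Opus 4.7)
The plan is to reduce everything to the observation that the operator $F \mapsto G$ with $G(x) = F(1-x)$ preserves the class $\mathscr{C}$ (by Lemma~\ref{lem:31}\ref{it:2}), while the operator $W \mapsto 1-W$ simply replaces the degree function by its complement, $d_{1-W}(x) = 1 - d_W(x)$.

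Concretely, I would fix an arbitrary $F \in \mathscr{C}$ and set $G(x) = F(1-x)$. By the cited part of Lemma~\ref{lem:31}, $G$ also belongs to $\mathscr{C}$, so by hypothesis $G$ is good for $W$. Using $d_{1-W}(x) = 1 - d_W(x)$, the substitution
\[
\int_0^1 F\bigl(d_{1-W}(x)\bigr)\,\mathrm{d}x = \int_0^1 G\bigl(d_W(x)\bigr)\,\mathrm{d}x
\]
turns the inequality we want to prove into the inequality that $G$ is good for $W$ provides. It only remains to match the two right-hand sides.

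For this bookkeeping step, write $\gamma = t(\,|\,, W)$ and $\eta = 1-\sqrt{1-\gamma}$, together with $\gamma' = t(\,|\,, 1-W) = 1-\gamma$ and $\eta' = 1-\sqrt{1-\gamma'} = 1-\sqrt{\gamma}$. The identities $1-\sqrt{\gamma} = \eta'$, $\sqrt{\gamma} = 1-\eta'$, $1-\eta = \sqrt{1-\gamma} = \sqrt{\gamma'}$, and $\eta = 1-\sqrt{\gamma'}$ together with $G(0)=F(1)$, $G(\sqrt{\gamma}) = F(\eta')$, $G(\eta) = F(\sqrt{\gamma'})$, and $G(1)=F(0)$ let one check directly that
\[
(1-\sqrt{\gamma})G(0)+\sqrt{\gamma}G(\sqrt{\gamma}) = (1-\eta')F(\eta')+\eta' F(1)
\]
and
\[
(1-\eta)G(\eta)+\eta G(1) = (1-\sqrt{\gamma'})F(0)+\sqrt{\gamma'}F(\sqrt{\gamma'})\,.
\]
That is, the two expressions inside the maximum get swapped, and consequently the maximum appearing in the goodness condition for $G$ with respect to $W$ coincides with the maximum appearing in the goodness condition for $F$ with respect to $1-W$.

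There is no real obstacle in this proof; the content lies entirely in recognising that complementing the graphon corresponds to pre-composing $F$ with the involution $x\mapsto 1-x$, and in noting that $\mathscr{C}$ has been designed (via Lemma~\ref{lem:31}\ref{it:2}) to be invariant under that pre-composition. The symmetry between the ``clique'' and ``anticlique'' branches of the max is what makes the roles of the two branches interchange when passing from $W$ to $1-W$.
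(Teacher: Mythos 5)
Your proof is correct and follows essentially the same route as the paper: pass to $G(x)=F(1-x)\in\mathscr{C}$ via Lemma~\ref{lem:31}\ref{it:2}, use $d_{1-W}=1-d_W$ to rewrite the integral, and observe that the two branches of the maximum simply swap. The only difference is notational (you take $\gamma=t(\,|\,,W)$ whereas the paper's proof names $\gamma=t(\,|\,,1-W)$), and your bookkeeping identities check out.
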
 

\begin{proof}
	Let $F\in\C$ be a function that we want to prove good for $1-W$. 
	Using the fact that the function~$G$ defined in~Lemma~\ref{lem:31}\ref{it:2} is good for $W$ 
	we find that the numbers 
	\[
	\gamma=t(\,|\,, 1-W)=1-t(\,|\,, W)
	\quad \text{ and } \quad 
	\eta=1-\sqrt{1-\gamma}
	\]
	satisfy
	\begin{align*}
		&\int_0^1 F\bigl(d_{1-W}(x)\bigr)\mathrm{d} x=
		\int_0^1 G\bigl( d_{W}(x)\bigr)\;\mathrm{d} x \\
		&\le
		\max\left(\bigl(1-\sqrt{1-\gamma}\bigr)G(0)+\sqrt{1-\gamma}G(\sqrt{1-\gamma}), 
		\sqrt{\gamma}G\bigl(1-\sqrt{\gamma}\bigr)+(1-\sqrt{\gamma}\bigr)G(1)\right) \\
		&=\max\left((1-\eta)F(\eta)+\eta F(1), 
		\bigl(1-\sqrt{\gamma}\bigr)F(0)+\sqrt{\gamma}F\bigl(\sqrt{\gamma}\bigr)\right)\,,
	\end{align*}
	as desired.
\end{proof}

The content of the next lemma is that one cannot construct ``$L$-shaped counterexamples'' 
for functions in $\C$ as in Example~\ref{ex:22}.
It is actually the only place in the entire proof where we really have to work in an 
essential way with~\ref{it:C1} and~\ref{it:C2}. Everything else follows by iterating this 
case by means of Lemma~\ref{lem:31} using convexity alone. 
 
\begin{lemma}\label{lem:32} 
	If $F\colon [0,1]\longrightarrow\mathbb{R}$ is in $\C$ and $x, y, z\in[0,1]$
	satisfy $x+y+z=1$, 
	then
	\[
		xF(0)+yF(y+z)+zF(y)\le 
		\max\left(\bigl(1-\sqrt{\gamma}\bigr)F(0)+\sqrt{\gamma}F(\sqrt{\gamma}), 
		(1-\eta)F(\eta)+\eta F(1)\right)\,,
	\]
	where $\gamma=y^2+2yz$ and $\eta=1-\sqrt{1-\gamma}$.
\end{lemma}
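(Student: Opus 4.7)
The plan is to fix $\gamma$ and parametrize the admissible triples $(x, y, z)$ by a single variable along the constraint curve. Solving $y^2 + 2yz = \gamma$ for $z$ gives $z = (\gamma - y^2)/(2y)$, and the requirement $x = 1 - y - z \ge 0$ forces $y \in [\eta, \sqrt{\gamma}]$. At $y = \sqrt{\gamma}$ one has $z = 0$ and $x = 1 - \sqrt{\gamma}$, which reproduces the clique contribution $(1-\sqrt{\gamma})F(0) + \sqrt{\gamma}F(\sqrt{\gamma})$, while at $y = \eta$ one has $x = 0$ and $z = 1 - \eta$, which reproduces the anticlique contribution $(1-\eta)F(\eta) + \eta F(1)$. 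The edge cases $\gamma \in \{0, 1\}$ are trivial, so we may assume $\gamma \in (0,1)$. Setting $b := y+z$ and
\[
g(y) := (1-b)F(0) + yF(b) + (b-y)F(y),
\]
the lemma reduces to the claim that $g$ attains its maximum on $[\eta, \sqrt{\gamma}]$ at one of the two endpoints.

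We would establish this by showing that $g''(y^*) > 0$ at every interior critical point $y^* \in (\eta, \sqrt{\gamma})$, which rules out interior local maxima. The constraint rewrites as $2yb = y^2 + \gamma$, from which the clean identities $b'(y) = -z/y$ and $b''(y) = (y + 2z)/y^2$ drop out. A direct differentiation gives
\[
g'(y) = \tfrac{z}{y}\bigl[F(0) - F(y)\bigr] + F(b) - F(y) - z\bigl[F'(b) - F'(y)\bigr],
\]
so (after division by $z > 0$) the critical-point equation $g'(y^*) = 0$ becomes exactly the hypothesis of Definition~\ref{dfn:23}\ref{it:C1} with $a = 0$. Differentiating once more and using this relation to eliminate $F(y) - F(0)$, a careful simplification collapses the expression into
\begin{align*}
yz \cdot g''(y^*) &= 2(b-y)\bigl[F(b) - (b-y)F'(b) + \tfrac12(b-y)^2 F''(b) - F(y)\bigr]\\
&\qquad + y\bigl[F(b) - (b-y)F'(y) + (b-y)^2 F''(y) - F(y)\bigr].
\end{align*}
Since $y - a = y$ when $a = 0$, the right-hand side is precisely the quantity appearing in the conclusion of~\ref{it:C1}, and hence it is positive; therefore $g''(y^*) > 0$, as desired.

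The main obstacle is the bookkeeping required to see that, after the critical-point substitution and multiplication by $yz$, the raw formula for $g''$ collapses into exactly the expression appearing in~\ref{it:C1}. The clean identities for $b'$ and $b''$ make the computation manageable, but the cancellations are delicate and leave no slack. Condition~\ref{it:C2} is not needed here; it is tailored to situations in which the increment in the slopes of $F$ is concentrated on the left rather than the right of $y$, a scenario that should be handled elsewhere via the complementation symmetry established in Lemma~\ref{lem:37}.
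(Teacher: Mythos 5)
Your proposal is correct and follows essentially the same route as the paper's proof: parametrize the admissible triples by $y\in[\eta,\sqrt{\gamma}]$ along the constraint $y^2+2yz=\gamma$ (the paper first normalizes $F(0)=0$ and calls the resulting function $J$), observe that the vanishing of the first derivative at an interior critical point is exactly the hypothesis of~\ref{it:C1} with $a=0$, and use the conclusion of~\ref{it:C1} to show the second derivative is positive there, which forces the maximum to the endpoints; your identity $yz\,g''(y^*)=\text{(\ref{it:C1} expression)}$ at critical points matches the paper's identity $\text{(\ref{it:C1} expression)}=t_0z(t_0)J''(t_0)+(2z(t_0)+t_0)J'(t_0)$ once $J'(t_0)=0$ is used. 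The only cosmetic difference is that the paper phrases the endpoint claim as a contradiction with $J'(t_0)=0$, $J''(t_0)\le 0$, and explicitly checks $t+z(t)\le 1$ so that~\ref{it:C1} is applicable, a small verification you should also record.
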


\begin{rem}
Observe here that $xF(0)+yF(y+z)+zF(y)$ is the value of 
$\int_0^1 F\bigl(d_W(x)\bigr) \mathrm{d} x$ for a graphon $W$ as constructed in 
Example~\ref{ex:22}.
\end{rem}

\begin{proof}[Proof of Lemma~\ref{lem:32}] 
	By Lemma~\ref{lem:31}\ref{it:1} we may assume $F(0)=0$ for simplicity. 
	If $\gamma=0$, then $y=z=0$, and if $\gamma=1$, then $y=1$ and $z=0$. 
	In both cases the claim is clear, so we may suppose $0<\gamma<1$ from now on. 
	As one easily confirms, the closed interval $C=\left[\eta, \sqrt{\gamma}\right]$ 
	is then non-trivial.
	Since 
	\[
		1-\gamma=(x+y+z)^2-(y^2+2yz)\ge (x+z)^2\,,
	\]
	we have $\eta\le 1-(x+z)=y$. Moreover $y^2\le \gamma$ entails $y\le \sqrt{\gamma}$, 
	so that altogether we get $y\in C$. Conversely, if for any $t\in C$ one sets 
	$z(t)=\tfrac{\gamma-t^2}{2t}$, then $z(t)\ge 0$ and
	\[
		t+z(t)=\frac{\gamma+t^2}{2t}=1-\frac{1-\gamma-(1-t)^2}{2t}\le 1\,,
	\]
	for which reason the numbers $1-t-z(t)$, $t$, and $z(t)$ satisfy the hypothesis on $x$, 
	$y$ and~$z$ in the statement of the lemma. Notice that $z(\eta)=1-\eta$ and
	$z(\sqrt{\gamma})=0$. Defining the function 
	$J\colon C\longrightarrow\mathbb{R}$ by 
	\[
		J(t)=tF\bigl(t+z(t)\bigr)+z(t)F(t)
	\]
	for all $t\in C$ we are to prove that $J(t)\le\max \left(J(\eta), J(\sqrt{\gamma})\right)$, 
	i.e., that $J$ attains its maximum at a boundary point of $C$. 
	If this failed, there would exist an interior point $t_0$ of~$C$ such that $J'(t_0)=0$ 
	but $J''(t_0)\le 0$. Since 
	\[
		z'(t)=-\frac{z(t)}{t}-1
	\]
	and thus 
	\begin{equation*}
		J'(t)=F\bigl(t+z(t)\bigr)-z(t)F'\bigl(t+z(t)\bigr)+z(t)F'(t)-
		F(t)-\frac{z(t)}{t} F(t)\,,
	\end{equation*}
	the equation $J'(t_0)=0$ can be rewritten as follows (recall that $F(0) = 0$ and $z(t_0) > 0$):
	\[
		\frac{F\bigl(t_0+z(t_0)\bigr)-F(t_0)}{z(t_0)}-\frac{F(t_0)-F(0)}{t_0}
		=F'\bigl(t_0+z(t_0)\bigr)-F'(t_0)\,.
	\]
	In other words, the numbers $0<t_0<t_0+z(t_0)$ are as $a,y,b$ in~\ref{it:C1}, and by the assumption that $F\in\C$ we have
	\begin{align*}
		2z(t_0)&\left[F\bigl(t_0+z(t_0)\bigr)-z(t_0)F'\bigl(t_0+z(t_0)\bigr)
		+\tfrac12z(t_0)^2F''\bigl(t_0+z(t_0)\bigr)-F(t_0)\right] \\ 
		+&t_0\left[F\bigl(t_0+z(t_0)\bigr)-z(t_0)F'(t_0)+z(t_0)^2F''(t_0)-F(t_0)\right]>0\,.
	\end{align*} 
	This rewrites as
	\[
		t_0z(t_0)J''(t_0)+\bigl(2z(t_0)+t_0\bigr)J'(t_0)>0\,,
	\]
contradicting our choice of $t_0$ as a point for which $J'(t_0) = 0$ and $J''(t_0) \le 0$. 
	This completes the proof of the lemma.
\end{proof}

To explain how the preceding lemma may actually be used in an inductive argument we 
introduce the following notation: given a graphon $W$ and a real number $\lambda\in[0,1]$ 
we define $[\lambda, W]$ to be the graphon satisfying
\[
	[\lambda, W](x, y)=
	\begin{cases}
		0 &\text{ if } 0\le x< \lambda \text{ or } 0\le y< \lambda, \\
		W\left(\frac{x-\lambda}{1-\lambda}, \frac{y-\lambda}{1-\lambda}\right)&  
		\text{ if } \lambda \le x\le 1  \text{ and } \lambda\le y\le 1.
	\end{cases}
\]

\begin{lemma} \label{lem:33}
	If $\lambda\in [0, 1]$ and the graphon $W$ has the property that all functions 
	in $\C$ are good for it, then the same applies to $[\lambda, W]$. 
\end{lemma}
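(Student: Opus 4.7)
The plan is to reduce the integral for $[\lambda, W]$ to one the hypothesis already controls, and then to dispatch the second (``anticlique'') branch of the resulting bound using Lemma~\ref{lem:32}.

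First I would compute $d_{[\lambda, W]}$ directly from the definition: it vanishes on $[0,\lambda)$, while for $x \in [\lambda,1]$ a substitution yields $d_{[\lambda, W]}(x) = (1-\lambda)\, d_W\bigl(\tfrac{x-\lambda}{1-\lambda}\bigr)$. Changing variables in the integral then gives
\[
\int_0^1 F\bigl(d_{[\lambda, W]}(x)\bigr)\, \mathrm{d} x \;=\; \lambda\, F(0) + (1-\lambda) \int_0^1 \tilde F\bigl(d_W(u)\bigr)\, \mathrm{d} u,
\]
where $\tilde F(t) = F\bigl((1-\lambda)t\bigr)$. By Lemma~\ref{lem:31}\ref{it:3} (applied with $r=0$ and $s=1-\lambda$) the function $\tilde F$ lies in $\C$, so the hypothesis on $W$ applies to $\tilde F$.

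Write $\gamma' = t(\,|\,,W)$, $\eta' = 1-\sqrt{1-\gamma'}$, and observe that $\gamma := t(\,|\,,[\lambda, W]) = (1-\lambda)^2 \gamma'$, hence $(1-\lambda)\sqrt{\gamma'} = \sqrt{\gamma}$. I would then split according to which term attains the maximum in the bound for $\int_0^1 \tilde F(d_W(u))\, \mathrm{d} u$. If the clique term $(1-\sqrt{\gamma'})\tilde F(0) + \sqrt{\gamma'}\tilde F(\sqrt{\gamma'})$ wins, a short calculation collapses $\lambda F(0) + (1-\lambda)\bigl[(1-\sqrt{\gamma'})F(0) + \sqrt{\gamma'} F(\sqrt{\gamma})\bigr]$ to $(1-\sqrt{\gamma})F(0) + \sqrt{\gamma}F(\sqrt{\gamma})$, which is exactly the clique term in the target bound for $[\lambda, W]$.

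If instead the anticlique term wins, the resulting upper bound reads
\[
\lambda\, F(0) + (1-\lambda)(1-\eta')\, F\bigl((1-\lambda)\eta'\bigr) + (1-\lambda)\eta'\, F(1-\lambda).
\]
Setting $x = \lambda$, $y = (1-\lambda)\eta'$, and $z = (1-\lambda)(1-\eta')$, this is literally $x F(0) + z F(y) + y F(y+z)$, and one checks $x+y+z = 1$ together with $y^2 + 2yz = (1-\lambda)^2 \eta'(2-\eta') = (1-\lambda)^2 \gamma' = \gamma$. Lemma~\ref{lem:32} then immediately yields the desired bound, completing the proof. The only place any insight is needed is the recognition that the second case is in precisely the form handled by Lemma~\ref{lem:32}; everything else is routine substitution, so I do not foresee a genuine obstacle.
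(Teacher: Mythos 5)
Your proposal is correct and follows essentially the same route as the paper: reduce to the rescaled function $F\bigl((1-\lambda)x\bigr)\in\C$ via Lemma~\ref{lem:31}\ref{it:3}, collapse the clique branch directly, and feed the anticlique branch into Lemma~\ref{lem:32} with exactly the substitution $x=\lambda$, $y=(1-\lambda)\eta$, $z=(1-\lambda)(1-\eta)$. All the computations (degree function, $\gamma'=(1-\lambda)^2\gamma$, and $y^2+2yz=\gamma'$) check out, so there is nothing to add.
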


\begin{proof}
	Let $F\in\C$ be any function that we want to prove good for $[\lambda, W]$. 
	By Lemma~\ref{lem:31}\ref{it:3} the function 
	$H\colon [0,1]\longrightarrow\mathbb{R}$ given by $H(x)=F\bigl((1-\lambda)x\bigr)$ 
	for all $x\in [0,1]$ is in $\C$. Thus it is good for~$W$, which tells us that
	\[
		\int_0^1 H\bigl(d_W(x)\bigr)\;\mathrm{d} x\le 
		\max\left(\bigl(1-\sqrt{\gamma}\bigr)H(0)+\sqrt{\gamma}H(\sqrt{\gamma}), 
		(1-\eta)H(\eta)+\eta H(1)\right)\,,
	\]
	where $\gamma=t(\,|\,, W)$ and $\eta=1-\sqrt{1-\gamma}$. 
	Since 
	\begin{align*}
		\int_0^1 F\bigl(d_{[\lambda,W]}(x)\bigr)\;\mathrm{d} x
		&=\lambda F(0)+\int_{\lambda}^1 F\left((1-\lambda)d_{W}
		\left(\frac{x-\lambda}{1-\lambda}\right)\right)\;\mathrm{d} x \\
		&=\lambda F(0)+(1-\lambda)\int_0^1 H\bigl(d_W(x)\bigr)\;\mathrm{d} x\,,
	\end{align*}
	it follows that either
	\[
		\int_0^1 F\bigl(d_{[\lambda,W]}(x)\bigr)\;\mathrm{d} x\le \lambda F(0)
		+(1-\lambda)\bigl(1-\sqrt{\gamma}\bigr)F(0)+(1-\lambda)
		\sqrt{\gamma}F\bigl((1-\lambda)\sqrt{\gamma}\bigr)\,, 
	\]
	or
	\[
		\int_0^1 F\bigl(d_{[\lambda,W]}(x)\bigr)\;\mathrm{d} x\le \lambda F(0)
		+(1-\lambda)(1-\eta)F\bigl((1-\lambda)\eta\bigr)+(1-\lambda)\eta F(1-\lambda)\,.
	\]
	In the former case the right side simplifies to 
	\[
		\bigl(1-\sqrt{\gamma'}\bigr)F(0)+
		\sqrt{\gamma'}F\bigl(\sqrt{\gamma'}\bigr)\,, 
	\]
	where $\gamma'=(1-\lambda)^2\gamma=t(\,|\,, [\lambda, W])$, meaning that $F$ is, 
	in particular, good for $[\lambda, W]$.

	\smallskip

	So from now on we may assume that the second alternative occurs. 
	Setting $x=\lambda$, $y=(1-\lambda)\eta$ and $z=(1-\lambda)(1-\eta)$ we thus get
	\[
		\int_0^1 F\bigl(d_{[\lambda,W]}(x)\bigr)\;\mathrm{d} x\le x F(0)+zF(y)+y F(y+z)\,.
	\]
	Since $y^2+2yz=(1-\lambda)^2(2\eta-\eta^2)=(1-\lambda)^2\gamma=\gamma'$, it follows 
	in view of Lemma~\ref{lem:32} that
	\[
		\int_0^1 F(d_{[\lambda,W]}(x))\mathrm{d} x\le 
		\max\left(\bigl(1-\sqrt{\gamma'}\bigr)F(0)+\sqrt{\gamma'}F(\sqrt{\gamma'}), 
		(1-\eta')F(\eta')+\eta' F(1)\right)\,,
	\]
	where $\eta'=1-\sqrt{1-\gamma'}$. This tells us that $F$ is indeed good for $[\lambda, W]$.
\end{proof}

A second construction we use is that of a graphon $[W, \lambda]$ defined for any real 
$\lambda\in[0,1]$ and graphon~$W$
by
\[
	[W, \lambda](x, y)=
	\begin{cases}
		W\left(\frac{x}{1-\lambda}, \frac{y}{1-\lambda}\right)&  \text{ if } 0\le x\le 
		1-\lambda \text{ and } 0\le y\le 1-\lambda\,, \\
		1 &\text{ if } 1-\lambda< x\le 1 \text{ or } 1-\lambda< y\le 1\,. \\
	\end{cases}
\]

\begin{lemma} \label{lem:34}
	If all functions in $\C$ are good for the graphon $W$ and $\lambda\in [0, 1]$,
	then all functions in $\C$ are good for $[W, \lambda]$ as well.
\end{lemma}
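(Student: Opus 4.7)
The plan is to reduce Lemma~\ref{lem:34} to Lemma~\ref{lem:33} by the same complementation trick employed in Lemma~\ref{lem:37}. Heuristically, attaching a block of ``$1$''s to $W$ (the operation $W\mapsto[W,\lambda]$) is the same as attaching a block of ``$0$''s to $1-W$ (the operation $V\mapsto [\lambda,V]$ with $V=1-W$) followed by taking complements. Since, by Lemma~\ref{lem:37}, all functions in $\C$ are good for $1-W$, Lemma~\ref{lem:33} already tells us that they are good for $[\lambda,1-W]$; the task reduces to transporting this information back to $[W,\lambda]$ via complementation.

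To make this precise, I would fix $F\in\C$, set $G(y)=F(1-y)$ so that $G\in\C$ by Lemma~\ref{lem:31}\ref{it:2}, and compute both degree functions directly. The result is that
\[
d_{[W,\lambda]}(x)=\begin{cases}(1-\lambda)\,d_W\bigl(x/(1-\lambda)\bigr)+\lambda & \text{if } x\le 1-\lambda,\\ 1 & \text{if } x>1-\lambda,\end{cases}
\]
while $d_{[\lambda,1-W]}$ vanishes on $[0,\lambda)$ and equals $(1-\lambda)\bigl(1-d_W((x-\lambda)/(1-\lambda))\bigr)$ on $[\lambda,1]$. Since $F\bigl(\lambda+(1-\lambda)d\bigr)=G\bigl((1-\lambda)(1-d)\bigr)$ and $F(1)=G(0)$, an elementary change of variables gives the key identity
\[
\int_0^1 F\bigl(d_{[W,\lambda]}(x)\bigr)\,\mathrm{d}x=\int_0^1 G\bigl(d_{[\lambda,1-W]}(x)\bigr)\,\mathrm{d}x\,.
\]

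Next I would apply Lemma~\ref{lem:33} to $G$ and $[\lambda,1-W]$, obtaining an upper bound in terms of $\gamma'=t(\,|\,,[\lambda,1-W])=(1-\lambda)^2(1-\gamma)$ and $\eta'=1-\sqrt{1-\gamma'}$, with $\gamma=t(\,|\,,W)$. Setting $\gamma^*=t(\,|\,,[W,\lambda])=1-(1-\lambda)^2(1-\gamma)=1-\gamma'$ and $\eta^*=1-\sqrt{1-\gamma^*}$, one checks $\sqrt{\gamma'}=1-\eta^*$ and $\eta'=1-\sqrt{\gamma^*}$. The final step is then a short bookkeeping exercise: substituting $G(0)=F(1)$, $G(\sqrt{\gamma'})=F(\eta^*)$, $G(\eta')=F(\sqrt{\gamma^*})$, $G(1)=F(0)$ into Lemma~\ref{lem:33}'s bound, the two branches of its maximum turn into $\eta^*F(1)+(1-\eta^*)F(\eta^*)$ and $(1-\sqrt{\gamma^*})F(0)+\sqrt{\gamma^*}F(\sqrt{\gamma^*})$, which is precisely the condition for $F$ to be good for $[W,\lambda]$.

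I do not anticipate any serious obstacle: the only delicate point is keeping straight that $\gamma^*$ and $\gamma'$ are complementary, so that the two branches of the maximum in Lemma~\ref{lem:33} swap roles under the substitution $G(y)=F(1-y)$. The genuinely hard content of the argument — the use of conditions~\ref{it:C1} and~\ref{it:C2} — was already expended in Lemma~\ref{lem:32} and absorbed into Lemma~\ref{lem:33}, so the present lemma is a purely formal consequence of Lemmata~\ref{lem:31},~\ref{lem:33} and~\ref{lem:37}.
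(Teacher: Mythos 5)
Your proposal is correct and follows essentially the paper's own route: the paper simply notes that $[W,\lambda]$ is isomorphic to $1-[\lambda,1-W]$ and combines Lemma~\ref{lem:37} with Lemma~\ref{lem:33}. The only difference is that you carry out the final complementation transport by hand (via $G(y)=F(1-y)$ and the degree-function computation), which amounts to re-deriving Lemma~\ref{lem:37} for the graphon $[\lambda,1-W]$ rather than citing it a second time.
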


\begin{proof} 
	Since $[W, \lambda]$ is isomorphic to $1-[\lambda, 1-W]$, this follows from
	Lemma~\ref{lem:37} and Lemma~\ref{lem:33}. 
\end{proof}

Now we come to the main result of this section.

\begin{prop}\label{prop:36}
	Every function in $\C$ is good for every step graphon.
\end{prop}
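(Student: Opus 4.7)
My plan is to prove the proposition by induction on the number $k$ of steps of $W$. The base $k=1$ is a constant graphon, which is handled by Observation~\ref{obs:21}.

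For the inductive step, fix the step sizes $\lambda_1,\ldots,\lambda_k$ and the edge density $\gamma = t(\,|\,,W) = \sum_{i,j}\lambda_i\lambda_j W_{ij}$. The functional
\[
\int_0^1 F\bigl(d_W(x)\bigr)\,\mathrm{d}x \;=\; \sum_{i=1}^k \lambda_i F(d_i),\qquad d_i = \sum_j \lambda_j W_{ij},
\]
is convex in the symmetric matrix $(W_{ij})$, since $F$ is convex and each $d_i$ depends linearly on the entries. The feasible region $\{(W_{ij}):W_{ij}\in[0,1],\ \sum_{i,j}\lambda_i\lambda_j W_{ij}=\gamma\}$ is a compact polytope cut out by a single linear equality, so the maximum is attained at a vertex, where all but at most one of the $W_{ij}$ lie in $\{0,1\}$. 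Thus it suffices to prove the bound for such ``almost'' $0/1$ step graphons.

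I would next inspect the $0/1$ pattern of this extremal $W$. If some row $P_i$ is entirely $0$, then, up to permutation of the parts, $W = [\lambda_i, W']$ for the rescaled $(k{-}1)$-step graphon $W'$ on the remaining parts, so Lemma~\ref{lem:33} together with the inductive hypothesis yields the bound; symmetrically, if some row is entirely $1$, then $W = [W',\lambda_i]$ and Lemma~\ref{lem:34} finishes. The main obstacle is the remaining case, in which no row is all $0$ and none all $1$. To handle it I would attempt a shifting/compression argument showing that among maximisers one may further assume the row supports are nested, i.e.\ that $W$ is a \emph{threshold} step graphon; such graphons always contain an isolated or universal row, reducing us to the cases just handled. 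Where shifting breaks down, coarsening steps with equal rows should collapse $W$ to a three-part graphon of the L-shape type of Example~\ref{ex:22}, so that Lemma~\ref{lem:32}---which is the sole point at which conditions~\ref{it:C1} and~\ref{it:C2} really enter---delivers the bound. The delicate point is to arrange the shifts so that $\gamma$ is preserved while $\sum_i\lambda_i F(d_i)$ does not decrease, possibly by coupling each swap with a compensating adjustment of the single fractional entry guaranteed by the vertex argument.
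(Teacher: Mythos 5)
Your opening reduction is sound and is in fact a reasonable substitute for the paper's bookkeeping: the paper also fixes an extremal counterexample (minimal number of parts, maximal number of $0/1$ entries) rather than taking a vertex of the polytope, and it also finishes the easy cases exactly as you do, via Lemmas~\ref{lem:33} and~\ref{lem:34} when some row is identically $0$ or identically $1$. The problem is that everything after that point in your proposal is deferred, and what you defer is precisely the substance of the proof. The ``shifting/compression argument'' you say you would attempt is the paper's key claim (Claim~\ref{clm:2}): if $\beta_{ir}>0$ and $r<s$, one decreases the value on $P_i\times P_r$ and increases it on $P_i\times P_s$, with the two changes weighted by $\alpha_s$ and $\alpha_r$ respectively (and a factor $2$ on diagonal blocks); this preserves $t(\,|\,,\cdot)$ exactly, and convexity of $F$ together with $d_r\le d_s$ (after ordering the parts by degree) shows that $\sum_i\alpha_iF(d_i)$ does not decrease. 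You explicitly flag this as ``the delicate point'' and do not carry it out, so as written the argument has a hole exactly where conditions beyond plain convexity must eventually be exploited.

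There is a second, independent gap. Even granting the nested-support structure for the $0/1$ part, the single fractional entry left over by your vertex argument can sit so that no row is all $0$ and no row is all $1$: in the paper's notation this is the case $0<\beta_{1k}<1$ with $\beta_{1i}=0$ for $i\in[k-1]$ and $\beta_{jk}=1$ for $2\le j\le k$. Your two proposed escapes --- a ``compensating adjustment of the fractional entry'' during the shifts, or collapsing to a three-part L-shape and invoking Lemma~\ref{lem:32} directly --- are not substantiated, and it is not clear either can be made to work; note that Lemma~\ref{lem:32} (the only place where~\ref{it:C1} and~\ref{it:C2} enter) is already consumed inside the proof of Lemma~\ref{lem:33}, so it is not available as a direct endgame for a general $k$-part configuration. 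The paper needs a genuinely separate idea here: split $P_k$ into two pieces of measures $(1-\beta_{1k})\alpha_k$ and $\beta_{1k}\alpha_k$, replace the fractional entry by a $0/1$ pattern on $k+1$ parts which is isomorphic to a graphon of the form $\bigl[\bigl[\tfrac{\alpha_1}{1-\beta_{1k}\alpha_k},W''\bigr],\beta_{1k}\alpha_k\bigr]$ with $W''$ having $k-1$ steps, apply Lemmas~\ref{lem:33} and~\ref{lem:34}, and then compare with the original graphon by Jensen's inequality, since the old degree $d_k$ is the corresponding convex combination of the two new degrees. Without this splitting-plus-Jensen step (or an equally concrete replacement) and without an actual proof of the shifting step, your proposal is an outline of the right strategy rather than a proof.
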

  
\begin{proof}
	We prove this statement by contradiction. If it does not hold, let $W$ be a step graphon and
	$F\colon [0,1]\longrightarrow\mathbb{R}$ a function in $\C$ such that $W$ fails to be 
	good for $F$. Let~$W$ be a step function with respect 
	to the partition ${\mathcal P}=\{P_1, \ldots, P_k\}$ of the unit interval, 
	write $\alpha_i=\lambda(P_i)$ for each $i\in[k] =\{1,2,\ldots,k\}$, 
	and let $\beta_{ij}$ be the value 
	attained by $W$ on $P_i\times P_j$ for $i, j\in[k]$. Let $T$ denote the number of 
	pairs $(i, j)\in[k]^2$ for which $\beta_{ij}\in\{0,1\}$. 
	We may assume that among all possibilities $W$ has been chosen in such a way that 
	$k$ is as small as possible and subject to this $T$ is as large as possible. 
	It is plain that the numbers $\alpha_1, \ldots, \alpha_k$ are positive under this assumption.

	\smallskip

	Defining $d_i=\sum_{j=1}^{k}\alpha_j\beta_{ij}$ for each 
	$i\in[k]$ we are to prove
	\[
		\sum_{i=1}^{k}\alpha_iF(d_i)\le 
		\max\left(\bigl(1-\sqrt{\gamma}\bigr)F(0)+\sqrt{\gamma}F(\sqrt{\gamma}), 
		(1-\eta)F(\eta)+\eta F(1)\right)\,,
	\]
	where $\gamma=\sum_{i=1}^{k}\alpha_i d_i$ and $\eta=1-\sqrt{1-\gamma}$. 
	Without loss of generality we may assume ${d_1\le d_2\le\ldots\le d_k}$. 
	Observation~\ref{obs:21} shows that $k\ge 2$.

	\begin{clm}\label{clm:2}
		If $1\le i\le k$, $1\le r<s\le k$, and $\beta_{ir}>0$, then $\beta_{is}=1$. 
	\end{clm}

	\begin{proof}
		Assume $\beta_{ir}>0$ and $\beta_{is}<1$. We construct a new graphon $W'$ from $W$ with the same edge density 
by decreasing the value on $P_i \times P_r$ and $P_r \times P_i$ and increasing the value on $P_i \times P_s$ and $P_s \times P_i$. In order for these contributions to cancel, the changes need to be proportional to $\alpha_s$ and $\alpha_r$ respectively, and an additional factor of $2$ is needed in case $i=r$ or $i=s$, since then $P_i \times P_r$ and $P_r \times P_i$ ($P_i \times P_s$ and $P_s \times P_i$, respectively) coincide. 

Formally, define a step function $Q$ with respect to $\mathcal P$ as follows: let $\delta_{ij}$ denote the Kronecker delta, 
		and set, for $x \in P_m$ and $y \in P_n$,
		\[
			Q(x,y)=
			\begin{cases}
				-(1+\delta_{ir})\alpha_s &\text{ if  $\{m,n\}=\{i,r\}$}\,, \\
				(1+\delta_{is})\alpha_r &\text{ if  $\{m,n\}=\{i,s\}$}\,, \\
				0 &\text{ otherwise.}
			\end{cases}
		\]
		Let $\eps\ge 0$ be maximal such that 
		$W'=W+\eps\,Q$ is still a graphon, i.e.~maps to the interval $[0,1]$. 
		By our assumptions on $\beta_{ir}$ and $\beta_{is}$, $\eps$ is positive, 
		and the maximality of $T$ implies that $F$ is good for $W'$ (observe that $W'$ 
		is identically $0$ or $1$ on at least $T+1$ of the sets $P_i \times P_j$ 
		by construction). Moreover
		\[
			\int_{[0,1]^2}Q(x, y)\mathrm{d}x \mathrm{d}y=\alpha_i\alpha_r\alpha_s
			\bigl((1+\delta_{is})(2-\delta_{is})-(1+\delta_{ir})(2-\delta_{ir})\bigr)=0\,,
		\]
		whence we have $t(\,|\,,W')=t(\,|\,, W)$. So to derive the desired contradiction 
		we just need to check that
		\[
			\int_0^1 F\bigl(d_{W}(x)\bigr)\;\mathrm{d} x\le \int_0^1 
			F\bigl(d_{W'}(x)\bigr)\;\mathrm{d} x\,. 
		\]
		For $j\in [k]$ we let $d'_j$ denote the value attained by $d_{W'}(x)$ for 
		$x\in P_j$. Clearly $d'_j=d_j$ holds for all $j\not\in\{i,r,s\}$. Further
		\[
			d'_r-d_r=-(1+\delta_{ir})\alpha_i\alpha_s\eps
			+\delta_{ir}(1+\delta_{is})\alpha_i\alpha_s\eps=-\alpha_i\alpha_s\eps\,,
		\]
		and similarly
		\[
			d'_s-d_s=+\alpha_i\alpha_r\eps\,.
		\]
		Finally, if $i\not\in\{r,s\}$, then $d'_i=d_i$. So altogether we get indeed
		\begin{align*}
			\int_0^1 F\bigl(d_{W'}(x)\bigr)\;&\mathrm{d} x
			-\int_0^1 F\bigl(d_{W}(x)\bigr)\;\mathrm{d} x 
			=\sum_{j=1}^{k}\alpha_j\bigl(F(d'_j)-F(d_j)\bigr)\\
			=&\alpha_s\bigl(F(d_s+\alpha_i\alpha_r\eps)
			-F(d_s)\bigr)+\alpha_r\bigl(F(d_r-\alpha_i\alpha_s\eps)-F(d_r)\bigr) \\
			\ge&\alpha_i\alpha_r\alpha_s\eps\bigl(F'(d_s)-F'(d_r)\bigr)\ge 0 
		\end{align*}
		by the convexity of $F$ and because $d_s\ge d_r$. This proves Claim~\ref{clm:2}.
	\end{proof}
	
	\goodbreak
	
	\begin{clm} 
		$\beta_{1k}>0$. 
	\end{clm}

	\begin{proof}
		If this does not hold, then $\beta_{1k}=0$ and the previous claim entails $\beta_{1i}=0$ 
		for all $i\in [k-1]$. It follows that there is a step graphon $W'$ with 
		$k-1$ steps such that $W$ is isomorphic to~$[\alpha_1, W']$. 
		Due to the minimality of $k$ all functions in $\C$ are good for $W'$ and 
		by Lemma~\ref{lem:33} the same applies to the graphon $W$, contrary to its choice.  
	\end{proof}

	\begin{clm}
		$\beta_{1k}<1$.
	\end{clm}

	\begin{proof}
		If we had $\beta_{k1}=1$, then Claim~\ref{clm:2} would imply $\beta_{ki}=1$ 
		for all $i$ with $2\le i\le k$. So some step graphon $W'$ has the property 
		that $[W', \alpha_k]$ is isomorphic to $W$, which yields a contradiction 
		via Lemma~\ref{lem:34}. 
	\end{proof}

	So we must have $0<\beta_{1k}<1$. The conclusions drawn from Claim~\ref{clm:2} in the 
	two previous proofs are still valid, i.e., we	have $\beta_{1i}=0$ for all $i\in [k-1]$ 
	and  $\beta_{jk}=1$ for all $j$ with $2\le j\le k$. Divide $P_k$ into two measurable subsets
	$Q_k$ and $Q_{k+1}$ satisfying $\lambda(Q_k)=(1-\beta_{1k})\alpha_k$ and, consequently, 
	$\lambda(Q_{k+1})=\beta_{1k}\alpha_k$. 
	Set $Q_i=P_i$ for $i\in[k-1]$ and ${\mathcal Q}=\{Q_1, \ldots, Q_{k+1}\}$. 
	Let $W'$ be the step graphon with respect to ${\mathcal Q}$ defined as follows: 
	for $x\in Q_i$ and $y\in Q_j$,
	\[
		W'(x,y)=
		\begin{cases}
			\beta_{ij} &\text{ if  $2\le i\le k$ and $2\le j\le k$}\,, \\
			0 &\text{ if  $i=1$ and $j\in[k]$ or vice versa}\,, \\
			1 &\text{ if  $i=k+1$ or $j=k+1$}\,.
		\end{cases}
	\]
	By the last two clauses $W'$ is isomorphic to 
	a graphon of the form 
	$\bigl[\bigl[\tfrac{\alpha_1}{1-\beta_{1k}\alpha_k},W''\bigr],\beta_{1k}\alpha_k\bigr]$ 
	for some graphon $W''$, and by the first clause $W''$ is a step graphon with $k-1$ steps.
	So Lemma~\ref{lem:33} and Lemma~\ref{lem:34} show that $F$ is good for $W'$. 
	Since $t(\,|\,, W')=\gamma$, this means
	\begin{align*}
		\sum_{i=1}^{k-1}\alpha_iF(d_i)&+\alpha_k\bigl((1-\beta_{1k})F(d')
		+\beta_{1k}F(d'')\bigr)\\
		& \le \max\left(\bigl(1-\sqrt{\gamma}\bigr)F(0)+\sqrt{\gamma}F(\sqrt{\gamma}), 
		(1-\eta)F(\eta)+\eta F(1)\right)
	\end{align*}
	for some real numbers $d'$ and $d''$ satisfying $(1-\beta_{1k})d'+\beta_{1k}d''=d_k$. 
	Now Jensen's inequality implies that $F$ is indeed good for $W$. 
\end{proof}

\section{Verifying the assumption for power functions}
\label{sec:power}
 
The only thing currently missing from a proof of Theorem~\ref{thm:main} is that we do not know yet 
that for $k\ge 2$ the function $x\longmapsto x^k$ is indeed in the class $\C$. 
To verify this is the main objective of the present section. 
Fortunately the first half is easy due to the following lemma:

\begin{lemma}\label{lem:41}
	Suppose that $F\colon [0,1]\longrightarrow\mathbb{R}$ is thrice continuously 
	differentiable and satisfies $F''(x)>0$ 
	as well as $F'''(x)\ge 0$ 
	for all $x\in (0,1)$. Then $F$ has the property~\ref{it:C1}.
\end{lemma}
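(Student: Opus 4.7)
The plan is to verify the conclusion of \ref{it:C1} by Taylor expansion, treating the two bracketed quantities separately. In fact, I expect that the hypothesis of \ref{it:C1} (the equality relating $F(a),F(y),F(b)$ and $F'(y),F'(b)$) will not be needed at all; both brackets turn out to be nonnegative on the nose from $F''>0$ and $F'''\ge 0$, and the sum is strictly positive because of the positive coefficients $2(b-y)$ and $(y-a)$.

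Write $h=b-y>0$ for brevity. For the first bracket, apply Taylor's formula with Lagrange remainder to expand $F(y)$ around $b$: there is some $\xi_1\in(y,b)$ with
\[
F(y)=F(b)-hF'(b)+\tfrac12 h^2 F''(b)-\tfrac16 h^3 F'''(\xi_1).
\]
Rearranging,
\[
F(b)-hF'(b)+\tfrac12 h^2F''(b)-F(y)=\tfrac16 h^3 F'''(\xi_1)\ge 0,
\]
using $F'''\ge 0$. For the second bracket, expand $F(b)$ around $y$: there is some $\xi_2\in(y,b)$ with
\[
F(b)=F(y)+hF'(y)+\tfrac12 h^2 F''(y)+\tfrac16 h^3 F'''(\xi_2).
\]
Substituting and adding $h^2F''(y)$,
\[
F(b)-hF'(y)+h^2F''(y)-F(y)=\tfrac{3}{2}h^2F''(y)+\tfrac16 h^3F'''(\xi_2)>0,
\]
using $F''>0$ and $F'''\ge 0$.

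Since $b-y>0$ and $y-a>0$, the linear combination on the left-hand side of the inequality in \ref{it:C1} is a nonnegative multiple of the first quantity plus a strictly positive multiple of the second, hence strictly positive. This establishes \ref{it:C1}. I do not foresee any real obstacle: the whole lemma reduces to two one-line Taylor expansions with the right sign conventions, and the only mild subtlety is to track the minus signs arising from expanding $F(y)$ around $b$ (i.e., the cube $(y-b)^3=-h^3$) so that the remainder contributes with the correct sign.
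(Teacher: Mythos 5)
Your proof is correct and follows essentially the same route as the paper: both show the inequality holds unconditionally (without the equality hypothesis of \ref{it:C1}) by proving the first bracket equals $\tfrac16(b-y)^3F'''(\xi)\ge 0$ via Taylor's theorem with Lagrange remainder and the second bracket is strictly positive using $F''(y)>0$ together with convexity/Taylor at $y$. The only cosmetic difference is that you expand both brackets by Taylor, whereas the paper handles the second bracket directly via the tangent-line inequality $F(b)\ge F(y)+(b-y)F'(y)$; the content is identical.
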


\begin{proof}
	We show something stronger, namely that independent of any further hypothesis 
	all real numbers $a<y<b$ from the unit interval satisfy
	\begin{align*}
		2&(b-y)\left[F(b)-(b-y)F'(b)+\tfrac12(b-y)^2F''(b)-F(y)\right] \\
		+&(y-a)\left[F(b)-(b-y)F'(y)+(b-y)^2F''(y)-F(y)\right]>0\,.
	\end{align*}
	Notice that the convexity assumptions on $F$ imply $F(b)\ge F(y)+(b-y)F'(y)$ 
	and~$F''(y)>0$, for which reason the second square bracket is positive. Furthermore 
	the general version of the mean value theorem yields the existence of some 
	real $\xi\in(y, b)$ such that
	\[
		F(y)=F(b)+(y-b)F'(b)+\tfrac12(y-b)^2F''(b)+\tfrac16(y-b)^3F'''(\xi)\,.
	\]
	Hence the first square bracket is $\tfrac16(b-y)^3F'''(\xi)\ge 0$. 
\end{proof}

\begin{rem}
	One could formulate a similar statement obtaining~\ref{it:C2} from $F'''(x)\le 0$. 
	Due to the symmetry expressed in Lemma~\ref{lem:31}\ref{it:2} and its proof 
	this is, of course, not surprising.
\end{rem}

To handle the second half we will use the following inequality twice:

\begin{lemma}\label{lem:43}
	If $x\ge 1$ is a real number and $m\ge 0$ an integer, then
	\[
		\sum_{i=0}^{m}(m+1-i)(3i-m)x^i\ge 0\,.
	\]
\end{lemma}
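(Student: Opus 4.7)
The plan is to factor out $(x-1)$ from the polynomial $P_m(x) = \sum_{i=0}^{m}(m+1-i)(3i-m)x^i$ and to verify that the remaining factor has nonnegative coefficients, which would make the inequality on $[1,\infty)$ immediate.

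First I would check that $P_m(1)=0$. Writing the coefficient $(m+1-i)(3i-m)$ as a polynomial in $i$ and summing via the standard formulas for $\sum i$ and $\sum i^2$, one is reduced to a quadratic expression in $m$ which collapses to $0$; this says exactly that $(x-1)$ divides $P_m(x)$. Hence $P_m(x)=(x-1)Q_m(x)$ for some polynomial $Q_m(x)=\sum_{i=0}^{m-1}c_ix^i$, whose coefficients are determined by the telescoping relation $c_{-1}=c_m=0$ and $(m+1-i)(3i-m)=c_{i-1}-c_i$. Solving, one obtains the partial-sum expression
\[
 c_i=\sum_{k=0}^{i}(m+1-k)(m-3k).
\]

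The main step is to evaluate this partial sum in closed form. Expanding $(m+1-k)(m-3k)=m(m+1)-(4m+3)k+3k^2$ and using the closed formulas for $\sum_{k=0}^{i}k$ and $\sum_{k=0}^{i}k^2$, I expect the expression to factor, and indeed a short computation gives
\[
 c_i=(i+1)(m-i)(m-i+1).
\]
This identity can also be verified by induction on $i$, since the difference $c_i-c_{i-1}$ must equal $(m+1-i)(m-3i)$, an algebraic identity in the two variables $i$ and $m$ which is routine. In particular $c_m=(m+1)\cdot 0\cdot 1=0$, consistent with the degree drop.

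Since $0\le i\le m-1$ each factor of $c_i$ is a positive integer, so $c_i\ge 0$ (with strict inequality for $i<m$). Combined with $x-1\ge 0$ and $x^i\ge 0$ for $x\ge 1$, this yields
\[
 P_m(x)=(x-1)\sum_{i=0}^{m-1}(i+1)(m-i)(m-i+1)\,x^i\ge 0,
\]
as claimed. The only nontrivial point is guessing the closed form for $c_i$; once one computes $c_i$ for $m=3,4,5$, the product $(i+1)(m-i)(m-i+1)$ is easy to recognise, and the verification reduces to an algebraic identity in $i$ and $m$.
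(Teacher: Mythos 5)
Your proof is correct and amounts to the same argument as the paper's: the quotient you compute, $Q_m(x)=\sum_{i=0}^{m-1}(i+1)(m-i)(m+1-i)x^i$, is exactly the nonnegative polynomial the paper starts from, and the paper simply presents the identity in the opposite direction, multiplying $Q_m(x)\ge 0$ by $x-1$ rather than dividing by it. So aside from the direction of derivation (and your extra verification of $P_m(1)=0$, which your closed form for $c_i$ already subsumes via $c_m=0$), the two proofs coincide.
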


\begin{proof}
	It is obvious that
	\[
		\sum_{i=0}^{m-1}(i+1)(m-i)(m+1-i)x^i\ge 0\,.
	\]
	Multiplying this by $x-1$ gives us the desired inequality after some simple manipulations.
\end{proof}

\begin{prop}\label{prop:44}
	For each integer $k\ge 2$ the function $x\longmapsto x^k$ is in $\C$.
\end{prop}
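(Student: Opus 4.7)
The plan is to verify the two conditions (C1) and (C2) separately. For (C1), the strategy is to invoke Lemma~\ref{lem:41}: since $F(x)=x^k$ satisfies $F''(x)=k(k-1)x^{k-2}>0$ and $F'''(x)=k(k-1)(k-2)x^{k-3}\ge 0$ on $(0,1)$ for every $k\ge 2$, the hypotheses of Lemma~\ref{lem:41} are met at once.

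For (C2) I would set $u=y-a$ and $v=b-y$ and expand. Using the binomial theorem with $y=a+u$ and $a=y-u$, the two square brackets in (C2) evaluate to
\[
F(a)+uF'(a)+\tfrac12 u^{2}F''(a)-F(y)=-\sum_{i=3}^{k}\binom{k}{i}a^{k-i}u^{i}
\]
and
\[
F(a)+uF'(y)+u^{2}F''(y)-F(y)=\tfrac{3k(k-1)}{2}y^{k-2}u^{2}+\sum_{i=3}^{k}(-1)^{i}\binom{k}{i}y^{k-i}u^{i}.
\]
The constraint in (C2), rewritten as $\tfrac{F(b)-F(y)}{b-y}-F'(y)=\tfrac{F(y)-F(a)}{y-a}-F'(a)$, becomes after polynomial manipulation the identity
\[
v\sum_{n=0}^{k-2}(n+1)\,b^{k-2-n}y^{n}=u\sum_{m=0}^{k-2}(m+1)\,y^{k-2-m}a^{m}.
\]
Using this identity to eliminate $v$, the (C2) inequality reduces to a polynomial inequality in $a,y,b$ that is homogeneous of degree $2k-4$; by homogeneity one may normalize to $y=1$.

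The remaining step is to verify this polynomial inequality, and this is where Lemma~\ref{lem:43} enters. I would expect to apply the lemma twice: once with $x=b/y\ge 1$ and $m=k-2$ to handle the $b$-terms entering through the constraint-driven substitution, and once with $x=y/a\ge 1$ (the boundary case $a=0$ being dispatched separately by continuity) for the $a$-terms coming from the first bracket. The coefficients $(m+1-i)$ in Lemma~\ref{lem:43} should match the $(m+1)$ that appear in the constraint expansion, while the factor $(3i-m)$ is likely to emerge from combining the coefficient $\tfrac{3k(k-1)}{2}$ of the dominant $u^{2}y^{k-2}$ term of the second bracket with the alternating Taylor-type series, after a telescoping step. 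The main obstacle will be the careful algebraic bookkeeping required to match coefficients precisely, so that after all substitutions the target expression takes exactly the form $\sum_{i=0}^{k-2}(k-1-i)(3i-k+2)x^{i}$ (multiplied by a positive factor) for Lemma~\ref{lem:43} to apply.
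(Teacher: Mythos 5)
Your treatment of \ref{it:C1} is complete and coincides with the paper's: Lemma~\ref{lem:41} applies immediately since $(x^k)''>0$ and $(x^k)'''\ge 0$ on $(0,1)$ for $k\ge 2$. Your setup for \ref{it:C2} is also correct and follows the same route as the paper: the expansions of the two square brackets in terms of $u=y-a$ are right, the constraint does factor as $v\sum_{n=0}^{k-2}(n+1)b^{k-2-n}y^{n}=u\sum_{m=0}^{k-2}(m+1)y^{k-2-m}a^{m}$, everything is homogeneous so one may normalize $y=1$ (after which $b>1$), and the key tool is indeed Lemma~\ref{lem:43} applied twice, once with $x=b$ and once with $x=1/a$ (with $a=0$ handled separately), both with $m=k-2$.

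However, at exactly the point where the proof becomes nontrivial, the proposal stops: you write that you ``would expect'' the two applications of Lemma~\ref{lem:43} to combine with the constraint so that the coefficients ``match'' after ``a telescoping step,'' and you flag the bookkeeping as the main obstacle without doing it. That bookkeeping \emph{is} the proof of \ref{it:C2}. In the paper it consists of a specific chain: apply Lemma~\ref{lem:43} to $x=b$, multiply by $\tfrac{b-1}{2}>0$ and rearrange to an upper bound for $\sum_{i=0}^{k-1}(k-1-i)b^i$ in terms of $\sum_{i=1}^{k-1}b^i$; apply Lemma~\ref{lem:43} to $x=1/a$, multiply by $a^{k-2}$, reverse the summation, \emph{strictly} weaken the coefficients (this is where the strict inequality ``$>$'' demanded by \ref{it:C2} comes from --- your plan never addresses strictness), and multiply by $\tfrac{1-a}{2}>0$; then use the constraint once to trade the $b$-sum for an $a$-sum, multiply by $b-1$ and use the constraint a second time, factor out $1-a$, weaken once more, and finally multiply by $(1-a)^2$ to land exactly on the target expression. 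None of these steps is carried out or even specified in your proposal, and your alternative idea of eliminating $v$ via the constraint and verifying the resulting single polynomial inequality (whose degree, incidentally, is $2k-1$ after clearing the denominator, not $2k-4$) is left entirely unverified; it is not at all clear that it reduces to one clean instance of Lemma~\ref{lem:43} as you hope, since the paper needs the constraint twice and several intermediate positivity factors. So the proposal is a correct plan with the right ingredients, but the core verification of \ref{it:C2} is missing.
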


\begin{proof}
	Condition~\ref{it:C1} holds by Lemma~\ref{lem:41}, so it remains to deal
	with~\ref{it:C2}. 
	Omitting the condition $b\le 1$ we prove that if any nonnegative 
	real numbers $a<y<b$ satisfy
	\[
		\frac{b^k-y^k}{b-y}-\frac{y^k-a^k}{y-a}=k(y^{k-1}-a^{k-1})\,,
	\]
	then
	\begin{align*}
		2&(y-a)\left[a^k+k(y-a)a^{k-1}+\binom{k}{2}(y-a)^2a^{k-2}-y^k\right] \\
		+&(b-y)\left[a^k+k(y-a)y^{k-1}+k(k-1)(y-a)^2y^{k-2}-y^k\right]>0\,.
	\end{align*}

	Everything is homogeneous, so we may suppose $y=1$ for notational simplicity. 
	So we are given that
	\begin{align}\label{441}
		\sum_{i=1}^{k-1}b^i=\sum_{i=1}^{k-1}a^i+k(1-a^{k-1})\,.
	\end{align}
	Applying Lemma~\ref{lem:43} to $x=b$ and $m=k-2$ we get
	\[
		\sum_{i=0}^{k-2}(k-1-i)(3i+2-k)b^i\ge 0\,.
	\]
	We multiply by $\tfrac{b-1}2$ (which is positive, since $b > y = 1$) to infer that
	\[
		\frac{(k-2)(k-1)}{2}+\sum_{i=1}^{k-1}(3i+1-2k)b^i\ge 0\,.
	\]
	We write $3i+1-2k$ as $3(i+1-k)+(k-2)$, split the sum and divide by $3$ to obtain
	\begin{align}\label{442}
		\sum_{i=0}^{k-1}(k-1-i)b^i\le 
		\frac{(k-1)k}{2}+\frac{k-2}3\Biggl\{\sum_{i=1}^{k-1}b^i-(k-1)\Biggr\}\,.
	\end{align}
	If $a\ne 0$, we can apply Lemma~\ref{lem:43} to $m=k-2$ and $x=\tfrac1a$ and obtain, 
	upon multiplication with $a^{k-2}$, the inequality
	\[
		\sum_{i=0}^{k-2}(k-1-i)(3i+2-k)a^{k-2-i}\ge 0\,.
	\]
	This is certainly also true for $a=0$, so it holds unconditionally. 
	Reversing the order of summation we find
	\[
		\sum_{i=0}^{k-2}(i+1)(2k-4-3i)a^{i}\ge 0\,,
	\]
	which may be weakened to
	\[
		\sum_{i=0}^{k-2}(i+1)(2k+2-3i)a^{i}> 0\,.
	\]
	Multiplying by $\frac{1-a}{2}$ (which is positive, since $a < y = 1$) we get
	\[
		\sum_{i=0}^{k-2}(k+1-3i)a^{i}+\frac{(k-1)(k-8)}{2}a^{k-1} > 0\,.
	\]
	Since $\frac{(k-1)(k-8)}{2} < 2(k-1)(k-2)$, this can again be weakened to
	\[
		\sum_{i=0}^{k-2}(k+1-3i)a^{i}+2(k-1)(k-2)a^{k-1}>0\,,
	\]
	which in turn can be rearranged in the same way as~\eqref{442} to read
	\[
		\frac{2(k-2)}{3}\Biggl\{\sum_{i=0}^{k-1}a^i-ka^{k-1}\Biggr\}
		<\sum_{i=0}^{k-2}(k-1-i)a^{i}\,.
	\]
	In combination with~\eqref{441} and~\eqref{442} this yields
	\begin{align*}
		2\sum_{i=0}^{k-1}(k-1-i)b^i &
		\le (k-1)k+\frac{2(k-2)}3\Biggl\{\sum_{i=1}^{k-1}b^i-(k-1)\Biggr\}\\
		&=(k-1)k+\frac{2(k-2)}3\Biggl\{\sum_{i=0}^{k-1}a^i-ka^{k-1}\Biggr\} \\
		&<(k-1)k+\sum_{i=0}^{k-2}(k-1-i)a^{i}\,.
	\end{align*}
	Now we multiply again by $b-1$ and use~\eqref{441}, thus learning that
	\[
		2\Biggl\{\sum_{i=0}^{k-1}a^{i}-ka^{k-1} \Biggr\} <(b-1)
		\Biggl\{(k-1)k+\sum_{i=0}^{k-2}(k-1-i)a^{i} \Biggr\}\,.
	\]
	The left side contains $1-a$ as a factor:
	\[\sum_{i=0}^{k-1}a^{i}-ka^{k-1} = (1-a)\sum_{i=0}^{k-2} (i+1)a^i \,.\]
	So after a further weakening we find
	\[
		2(1-a)\Biggl\{\sum_{i=0}^{k-2}(i+1)a^{i}- \binom{k}{2} a^{k-2} \Biggr\} 
		<(b-1)\Biggl\{(k-1)k+\sum_{i=0}^{k-2}(k-1-i)a^{i} \Biggr\}\,.
	\]
	Now is a good moment to multiply by $(1-a)^2$, because this leads to
	\begin{align*}
		2(1-a)\left(1-a^k -k(1-a)a^{k-1}-\binom{k}{2}(1-a)^2a^{k-2}\right) \\
 		<(b-1)\left(a^k+k(1-a)+k(k-1)(1-a)^2-1\right)\,,
	\end{align*}
	which is exactly what we wanted to prove.
\end{proof}

\begin{proof}[Proof of Theorem~\ref{thm:main}]
	We begin by showing~\eqref{eq:SkW}.
	The case $k=1$ is clear, so suppose $k\ge 2$ from now on. 
	If $W$ happens to be a step graphon, the result follows from Proposition~\ref{prop:36}, 
	its main assumption being verified in Proposition~\ref{prop:44}. 
	Now the general case follows from the known facts that both sides of the inequality we seek 
	to prove depend in a manner on~$W$ that is continuous with respect to the cut norm, 
	and that the step graphons are dense in $\WW0$ with respect to the cut norm. 
	
	For the moreover-part we suppose $k\ge 2$ and observe that for $\gamma=0$
	we have $\gamma^{(k+1)/2}=\eta+(1-\eta)\eta^k=0$. Proceeding with the case 
	$\gamma\in (0, 1]$ we take $\eps\in [0, 1)$ with $\gamma=1-\eps^2$ and, consequently, 
	$\eta=1-\eps$. Clearly   
	\begin{equation}\label{eq:squrediff}
		\gamma^{k+1}-\bigl(\eta+(1-\eta)\eta^k\bigr)^2
		=
		(1-\eps)^{k+1}\left((1+\eps)^{k+1}-(1-\eps)^{1-k}-2\eps-\eps^2(1-\eps)^{k-1}\right)\,,
	\end{equation}
	where the second factor  
	\begin{equation}\label{eq:Qdef}
		Q(\eps)=(1+\eps)^{k+1}-(1-\eps)^{1-k}-2\eps-\eps^2(1-\eps)^{k-1}
	\end{equation}
	develops into the convergent Taylor series
	\[
		Q(\eps)=(k-1)\eps^2+(k-1)\eps^3-\sum_{i=4}^\infty q_i \eps^i
	\]
	with 
	\[
		q_i=\binom{k+i-2}{i}-\binom{k+1}{i}+(-1)^i\binom{k-1}{i-2}
	\]
	for all $i\ge 4$. In particular, $Q(\eps)>0$ holds for all sufficiently small 
	positive values of $\eps$ and together with $\lim_{\eps \to 1^-}Q(\eps)=-\infty$
	it follows that the equation $Q(\eps)=0$ has at least one solution in $(0, 1)$.
	We shall prove later that, actually, there is a unique such solution, say~$\eps_k$.
	
	By~\eqref{eq:squrediff} this will tell us that the numbers $\gamma_k=1-\eps_k^2$
	and $\eta_k=1-\sqrt{1-\gamma_k}=1-\eps_k$ satisfy
	\[
		\gamma_k^{(k+1)/2}=\eta_k+(1-\eta_k)\eta_k^k \,.
	\]
	Besides, the estimates $Q(\eps)>0$
	for $\eps\in [0, \eps_k]$ and $Q(\eps)<0$ for $\eps\in [\eps_k, 1)$ translate 
	into the claims we made about the right side of~\eqref{eq:SkW}.
	
	It remains to establish the uniqueness of $\eps_k$. The crucial point is that for 
	every integer~${i\ge 4}$ we have 
	\[
		q_i
		\ge 
		\binom{k+2}{i}-\binom{k+1}{i}-\binom{k-1}{i-2}
		=
		\binom{k}{i-1}+\binom{k-1}{i-3}\ge 0\,.
	\]

	Now assume that there would exist two real numbers $0<\eps_*<\eps_{**}<1$ with 
	\[
		Q(\eps_*)=Q(\eps_{**})=0 \,.
	\]
	Because of~\eqref{eq:Qdef} this yields 
	\begin{align}
		(k-1)+(k-1)\eps_{*}&=\sum_{i=2}^\infty q_{i+2} \eps_{*}^i \label{eq:eps*} \\
		\intertext{	as well as }
		(k-1)+(k-1)\eps_{**}&=\sum_{i=2}^\infty q_{i+2} \eps_{**}^i \label{eq:eps**} \,.
	\end{align}
	Owing to~\eqref{eq:eps*} we obtain
	\[
		(k-1)+(k-1)\eps_{**}
		<
		\bigl((k-1)+(k-1)\eps_{*}\bigr)\frac{\eps_{**}}{\eps_{*}}
		=
		\sum_{i=2}^\infty q_{i+2} \eps_{*}^{i-1}\eps_{**}
		\le
		\sum_{i=2}^\infty q_{i+2} \eps_{**}^i \,,
	\]
	which contradicts~\eqref{eq:eps**}.
	This concludes the proof of the uniqueness of $\eps_k$ and, hence, the proof
	of Theorem~\ref{thm:main}.
\end{proof}

\begin{rem}
	Direct calculations reveal that $\gamma_2=\frac 12$ and $\gamma_3=\frac 34$.
	Moreover, it can be shown that $\gamma_k=1-\frac{\alpha^2}{k^2}+O\bigl(\frac 1{k^3}\bigr)$,
	where $\alpha\approx 1.5936$ denotes the unique positive solution of the equation  
	$\frac{\alpha}2+e^{-\alpha}=1$.
\end{rem}

\section*{Acknowledgement}

We would like to thank D\'aniel Nagy for pointing us to the recent references~\cite{KRRS} 
and~\cite{Nagy}. Further thanks go to an anonymous referee for reading this article carefully
and making useful suggestions.

\begin{bibdiv}
\begin{biblist}

\bib{AFNW}{article}{
   author={\'Abrego, Bernardo M.},
   author={Fern\'andez-Merchant, Silvia},
   author={Neubauer, Michael G.},
   author={Watkins, William},
   title={Sum of squares of degrees in a graph},
   journal={JIPAM. J. Inequal. Pure Appl. Math.},
   volume={10},
   date={2009},
   number={3},
   pages={Article 64, 34},
   issn={1443-5756},
   review={\MR{2551087}},
}

\bib{AK}{article}{
   author={Ahlswede, R.},
   author={Katona, G. O. H.},
   title={Graphs with maximal number of adjacent pairs of edges},
   journal={Acta Math. Acad. Sci. Hungar.},
   volume={32},
   date={1978},
   number={1-2},
   pages={97--120},
   issn={0001-5954},
   review={\MR{505076 (80g:05053)}},
   doi={10.1007/BF01902206},
}

\bib{CoLe}{article}{
   author={Conlon, David},
   author={Lee, Joonkyung},
   title={Finite reflection groups and graph norms},
   journal={Adv. Math.},
   volume={315},
   date={2017},
   pages={130--165},
   issn={0001-8708},
   review={\MR{3667583}},
   doi={10.1016/j.aim.2017.05.009},
}

\bib{Kat}{article}{
   author={Katona, G.},
   title={A theorem of finite sets},
   conference={
      title={Theory of graphs},
      address={Proc. Colloq., Tihany},
      date={1966},
   },
   book={
      publisher={Academic Press, New York},
   },
   date={1968},
   pages={187--207},
   review={\MR{0290982 (45 \#76)}},
}

\bib{KRRS}{article}{
   author={Kenyon, Richard},
   author={Radin, Charles},
   author={Ren, Kui},
   author={Sadun, Lorenzo},
   title={Multipodal Structure and Phase Transitions in Large Constrained
   Graphs},
   journal={J. Stat. Phys.},
   volume={168},
   date={2017},
   number={2},
   pages={233--258},
   issn={0022-4715},
   review={\MR{3667360}},
   doi={10.1007/s10955-017-1804-0},
}

\bib{KLL}{article}{
   author={Kim, Jeong Han},
   author={Lee, Choongbum},
   author={Lee, Joonkyung},
   title={Two approaches to Sidorenko's conjecture},
   journal={Trans. Amer. Math. Soc.},
   volume={368},
   date={2016},
   number={7},
   pages={5057--5074},
   issn={0002-9947},
   review={\MR{3456171}},
   doi={10.1090/tran/6487},
}

\bib{Krusk}{article}{
   author={Kruskal, Joseph B.},
   title={The number of simplices in a complex},
   conference={
      title={Mathematical optimization techniques},
   },
   book={
      publisher={Univ. of California Press, Berkeley, Calif.},
   },
   date={1963},
   pages={251--278},
   review={\MR{0154827 (27 \#4771)}},
}

\bib{LogCalc}{article}{
	author={Li, J.~L. Xiang},
	author={Szegedy, Bal\'{a}sz}, 
	title={On the logarithimic calculus and Sidorenko's conjecture}, 
	eprint={1107.1153},
}

\bib{Lov-LNGN}{book}{
   author={Lov{\'a}sz, L{\'a}szl{\'o}},
   title={Large networks and graph limits},
   series={American Mathematical Society Colloquium Publications},
   volume={60},
   publisher={American Mathematical Society, Providence, RI},
   date={2012},
   pages={xiv+475},
   isbn={978-0-8218-9085-1},
   review={\MR{3012035}},
}

\bib{Nagy}{article}{
   author={Nagy, D\'aniel T.},
   title={On the number of 4-edge paths in graphs with given edge density},
   journal={Combin. Probab. Comput.},
   volume={26},
   date={2017},
   number={3},
   pages={431--447},
   issn={0963-5483},
   review={\MR{3628912}},
   doi={10.1017/S0963548316000389},
}

\bib{Niki-4}{article}{
   author={Nikiforov, V.},
   title={The number of cliques in graphs of given order and size},
   journal={Trans. Amer. Math. Soc.},
   volume={363},
   date={2011},
   number={3},
   pages={1599--1618},
   issn={0002-9947},
   review={\MR{2737279 (2012g:05106)}},
   doi={10.1090/S0002-9947-2010-05189-X},
}

\bib{RazF}{article}{
   author={Razborov, Alexander A.},
   title={Flag algebras},
   journal={J. Symbolic Logic},
   volume={72},
   date={2007},
   number={4},
   pages={1239--1282},
   issn={0022-4812},
   review={\MR{2371204 (2008j:03040)}},
   doi={10.2178/jsl/1203350785},
}

\bib{RazT}{article}{
   author={Razborov, Alexander A.},
   title={On the minimal density of triangles in graphs},
   journal={Combin. Probab. Comput.},
   volume={17},
   date={2008},
   number={4},
   pages={603--618},
   issn={0963-5483},
   review={\MR{2433944 (2009i:05118)}},
   doi={10.1017/S0963548308009085},
}

\bib{CDT}{article}{
   author={Reiher, Chr.},
   title={The clique density theorem},
   journal={Ann. of Math. (2)},
   volume={184},
   date={2016},
   number={3},
   pages={683--707},
   issn={0003-486X},
   review={\MR{3549620}},
   doi={10.4007/annals.2016.184.3.1},
}

\bib{Szeg}{article}{
	author={Szegedy, Bal\'{a}sz}, 
	title={Relative entropy and Sidorenko's conjecture}, 
	eprint={1406.6738},
}

\bib{WW}{article}{
   author={Wagner, Stephan},
   author={Wang, Hua},
   title={On a problem of Ahlswede and Katona},
   journal={Studia Sci. Math. Hungar.},
   volume={46},
   date={2009},
   number={3},
   pages={423--435},
   issn={0081-6906},
   review={\MR{2657026}},
   doi={10.1556/SScMath.2009.1107},
}

\end{biblist}
\end{bibdiv}
 
\end{document}